\documentclass[preprint,12pt]{elsarticle}
\usepackage{amsthm,amsmath,amssymb}
\usepackage{graphicx}
\usepackage{float}
\usepackage{mathtools}
\usepackage{epstopdf}
\usepackage{epsfig}
\usepackage{subfigure}
\usepackage{blkarray}
\usepackage{xcolor}

\theoremstyle{plain}
\newtheorem{theorem}{Theorem}
\newtheorem{lemma}[theorem]{Lemma}
\newtheorem{corollary}[theorem]{Corollary}
\newtheorem{proposition}[theorem]{Proposition}

\theoremstyle{definition}
\newtheorem{definition}[theorem]{Definition}

\newtheorem{example}[theorem]{Example}

\newtheorem{problem}[theorem]{Problem}

\theoremstyle{remark}
\newtheorem{remark}[theorem]{Remark}

\title{Partial-twuality polynomials of matrices}

\author{Qingying Deng$^{1}$, \ \ Xian'an Jin$^{2, 3}$, \ \ Qi Yan$^{4}$\footnote{Corresponding author.}\\
\footnotesize~~\\
			\footnotesize $^{1}$ School of Mathematics and Computational Science, Xiangtan University, P. R. China\\
           \footnotesize $^{2}$ School of Mathematical Sciences, Xiamen University, P. R. China\\
            \small $^{3}$School of Mathematics and Statistics, Qinghai Minzu University, P. R. China\\
			\footnotesize $^{4}$ School of Mathematics and Statistics, Lanzhou University, P. R. China\\
			\footnotesize Email: qingying@xtu.edu.cn, xajin@xmu.edu.cn, yanq@lzu.edu.cn,}
\date{}
\journal{arXiv}
\begin{document}
\begin{abstract}
The study of partial-twuality polynomials originates from the classical operations of geometric duality and Petrie duality on cellularly embedded graphs. These involutions generate the symmetric group $S_3$, and applying them to subsets of edges yields the notions of partial-(geometric) duality, partial-Petriality, and more generally, partial-twuality. In this paper, we generalize this theory of partial-twuality polynomials within the framework of matrix algebra. The key observation that the Euler genus of a bouquet under a partial-twuality can be expressed as a rank function of its adjacency matrix motivates and leads to the definition of a partial-twuality polynomial for an arbitrary square matrix over any field, thereby providing a universal algebraic counterpart to the topological polynomials. We then investigate basic properties of these polynomials, including product formulas, recursion relations, degrees, interpolation behaviors, and invariance and duality theorems under the matrix operations of pivoting and inversion. We conclude by posing some problems for further research.
\end{abstract}
\begin{keyword} Partial-twuality\sep ribbon graph \sep adjacency matrix \sep rank \sep pivot
\vskip0.2cm
\MSC [2020]  05C31\sep 05C10\sep 05C30 \sep 15A16
\end{keyword}
\maketitle

\section{Introduction}

Geometric duality and Petrie duality have been extensively studied for cellularly embedded graphs, both in the plane and on surfaces of higher genus. Denoting these operations by $\delta$ and $\tau$, respectively, it is known that they are operations of order two that do not commute. A fundamental result is that their composition $\delta\tau$ has order three; consequently, they generate a group isomorphic to the symmetric group \(S_3\), consisting of six operators: the identity and the five non-trivial operators $\delta$, $\tau$, $\delta\tau$, $\tau\delta$, and $\tau\delta\tau$. Wilson \cite{SEW} first established that these operations yield an action of \(S_3\) on regular maps, and Lins \cite{LS} later extended this result to maps in general. The five non-trivial operators were subsequently termed twualities by Abrams and Ellis-Monaghan \cite{ABEL}.

Three connections between the Jones polynomials of (virtual) links and the topological Tutte polynomials of ribbon graphs have been established, based on different constructions of ribbon graphs from (virtual) link diagrams \cite{CP2007, C2G}. Chumtov \cite{CG} posed the concept of partial-duality of ribbon graphs and unified the above three relations.  Ellis-Monaghan and Moffatt \cite{EM1} have generalized this partial-duality construction to any of the five twuality operators, termed partial-twualities. Moreover, they introduced the more general notion of a twisted dual, obtained by allowing each edge to be independently assigned any one of the six operators.

Similar to the extensively studied genus polynomial in topological graph theory that enumerates all embeddings of a given graph by genus, Gross, Mansour and Tucker \cite{GMT, GMT2} introduced the partial-twuality polynomial for the operators $\delta$, $\tau$, $\delta\tau$, $\tau\delta$, and $\tau\delta\tau$ that enumerates all partial-twualities of a ribbon graph by Euler genus. They studied various basic properties of these polynomials, including interpolation and log-concavity. Subsequently, these polynomials have been further investigated in several other papers \cite{Chenyi, Cheng, CG2, QYXJ}.

Delta-matroids generalize embedded graphs in the same way that matroids generalize graphs. The analogy between graph theory and matroid theory is paralleled by the relationship between embedded graphs and delta-matroids, and a significant part of research on delta-matroids consists in generalising results about embedded graphs. For instance, Chun, Moffatt, Noble, and Rueckriemen \cite{CISR} showed that the theory of delta-matroids naturally extends that of embedded graphs, in the sense that fundamental ribbon graph operations and concepts admit natural counterparts in the delta-matroid setting. In this way the concepts of partial-duality, partial-Petriality, and Euler genus can be interpreted in the language of delta-matroids as twist, loop complementation, and width, respectively \cite{CISR, CMNR}.

Chmutov and Vignes-Tourneret \cite{CG2} raised the question of whether the partial-twuality polynomial, and related conjectures, can be extended to general delta-matroids.  In \cite{QYXJ3, QYJ4}, Yan and Jin introduced and studied partial-twuality polynomials for delta-matroids; indeed, such polynomials can be defined for any set system.  They proved that for normal binary delta-matroids (a generalization for bouquets), the partial-twuality polynomials are determined by signed intersection graphs, which implies that these polynomials for normal binary delta-matroids can be defined directly on signed intersection graphs.
Recently, Cheng \cite{Cheng} showed that the Euler genus of an orientable bouquet is related to the rank of the adjacency matrix of its intersection graph and extended the concept of the partial-dual Euler genus polynomial from intersection graphs to all graphs.

The matrix operation of pivoting was introduced by Tucker \cite{Tucker} in 1960. Bouchet \cite{AB2} later proved that this operation corresponds to twisting in a delta-matroid, which is the delta-matroid analogue of the partial-dual of a ribbon graph. This establishes a pivotal connection between matrix theory, delta-matroids, and embedded graphs. Motivated by this deep connection, in this paper we introduce an analogue for matrices of the partial-twuality polynomials, which have been studied for embedded graphs and delta-matroids in recent years.

In Section~2 we recall the partial-twuality polynomial for ribbon graphs. We show how the Euler genus after a partial-twuality of a bouquet can be written as a rank function of the adjacency matrix over \(\mathrm{GF}(2)\) of the bouquet's intersection graft. This observation leads to the central definition of the paper: we lift these rank expressions to define the partial-twuality polynomial for an arbitrary square matrix over any field. In Section 3, we study the basic properties of the partial-twuality polynomials of matrices. We establish their multiplicative behavior and reduction formulas for isolated vertices, derive leaf-reduction recursions for grafts, and obtain degree bounds. In Section 4, we study the interpolation behavior of the partial-twuality polynomials of matrices. In Section 5, we establish that the partial-twuality polynomials possess important invariance and duality properties under matrix operations. Specifically, the partial-$\langle\delta\rangle$ polynomial is invariant under matrix pivoting, while matrix inversion interchanges the polynomials for $\langle\tau\delta\tau\rangle$ and $\langle\tau\rangle$, and those for $\langle\delta\tau\rangle$ and $\langle\tau\delta\rangle$.  In Section 6, we pose several open problems  for further research.

\section{Partial-twuality polynomials: from ribbon graphs to matrices}
\subsection{Ribbon graphs and partial-twuality}
In this paper we use the language of ribbon graphs, which are equivalent to cellularly embedded graphs (see \cite{EM}) and are defined as follows.

\begin{definition}[\cite{BR}]
A {\it ribbon graph} $G=(V(G), E(G))$ is a $($orientable or non-orientable$)$ surface with boundary,
represented as the union of two sets of topological discs: a set $V(G)$ of vertices and a set $E(G)$ of edges with the following properties.
\begin{description}
\item[\rm (1)] The vertices and edges intersect in disjoint line segments.
\item[\rm (2)] Each such line segment lies on the boundary of precisely one vertex and precisely one edge.
\item[\rm (3)] Every edge contains exactly two such line segments.
\end{description}
\end{definition}

For a ribbon graph $G$, we denote by $f(G)$, $v(G)$, $e(G)$, and $c(G)$ the number of its boundary components, vertices, edges, and connected components, respectively. We let \[\chi(G)=v(G)-e(G)+f(G),\] the usual \emph{Euler characteristic}, where $G$ is connected or not. The notation $\varepsilon(G)$ represents the \emph{Euler genus} of $G$, that is, \[\varepsilon(G)=2c(G)-\chi(G).\]
 Given an edge subset $A \subseteq E(G)$, the \emph{ribbon subgraph} $G \setminus A$ is obtained by deleting all edges in $A$ while retaining all vertices. The \emph{spanning ribbon subgraph} induced by $A$ is defined as $(V(G), A)=G\setminus A^c$, where $A^c \coloneqq E(G) \setminus A$.

 Chmutov's partial-duality \cite{CG} extends the classical notion of geometric duality to a subset of edges. This operation plays a role in knot theory, topological graph theory, graph polynomials, and quantum field theory \cite{EM1, MFF}. In delta-matroids, partial duality corresponds to twisting \cite{CMNR}. In this paper, we define the partial-dual directly on ribbon graphs; for alternative constructions and perspectives, see \cite{EM}.

\begin{definition}[\cite{CG}]\label{def01}
For a ribbon graph $G$ and $A\subseteq E(G)$,  the \emph{partial-dual} $G^{\delta(A)}$ of $G$ with respect to $A$ is a ribbon graph obtained from $G$ by gluing a disc to $G$ along each boundary component of the spanning ribbon subgraph $(V (G), A)$ $($such discs will be the vertex-discs of $G^{\delta(A)})$, removing the interiors of all the vertex-discs of $G$ and keeping the edge ribbons unchanged.
\end{definition}

The \emph{geometric dual} $G^*$ of a ribbon graph $G$, which exchanges vertices and faces, is equal to $G^{\delta{(E(G))}}$.  Next, we introduce the Petrie dual of $G$, denoted by $G^{\times}$, which is also known as the Petrial. This concept originates from Wilson's work \cite{SEW}.

\begin{definition}[\cite{EM1}]
Let $G$ be a ribbon graph and $A\subseteq E(G)$. Then the \emph{partial-Petrial}, $G^{\tau(A)}$, of a ribbon graph $G$ with respect to $A$ is the ribbon graph obtained from $G$ by adding a half-twist to each edge in $A$.
\end{definition}

We first note a basic commutativity property: for distinct edges \(e\) and \(f\) of a ribbon graph, applying the partial Petrial \(\tau\) to \(e\) and the partial dual \(\delta\) to \(f\) are commuting operations. This commutativity fails, however, when the two operations are applied to the same edge. To formalize compositions of the operators \(\delta\) and \(\tau\) on a subset of edges, we introduce the following notation.

Let $G$ be a ribbon graph and $A \subseteq E(G)$. For any operator word $w = w_1w_2\cdots w_n$ over the alphabet $\{\delta, \tau\}$, we recursively define the ribbon graph
\begin{equation*}
G^{w(A)} \coloneqq (\cdots(G^{w_n(A)})^{w_{n-1}(A)} \cdots)^{w_1(A)}.
\end{equation*}

\begin{figure}
    \centering
\includegraphics[width=0.9\linewidth]{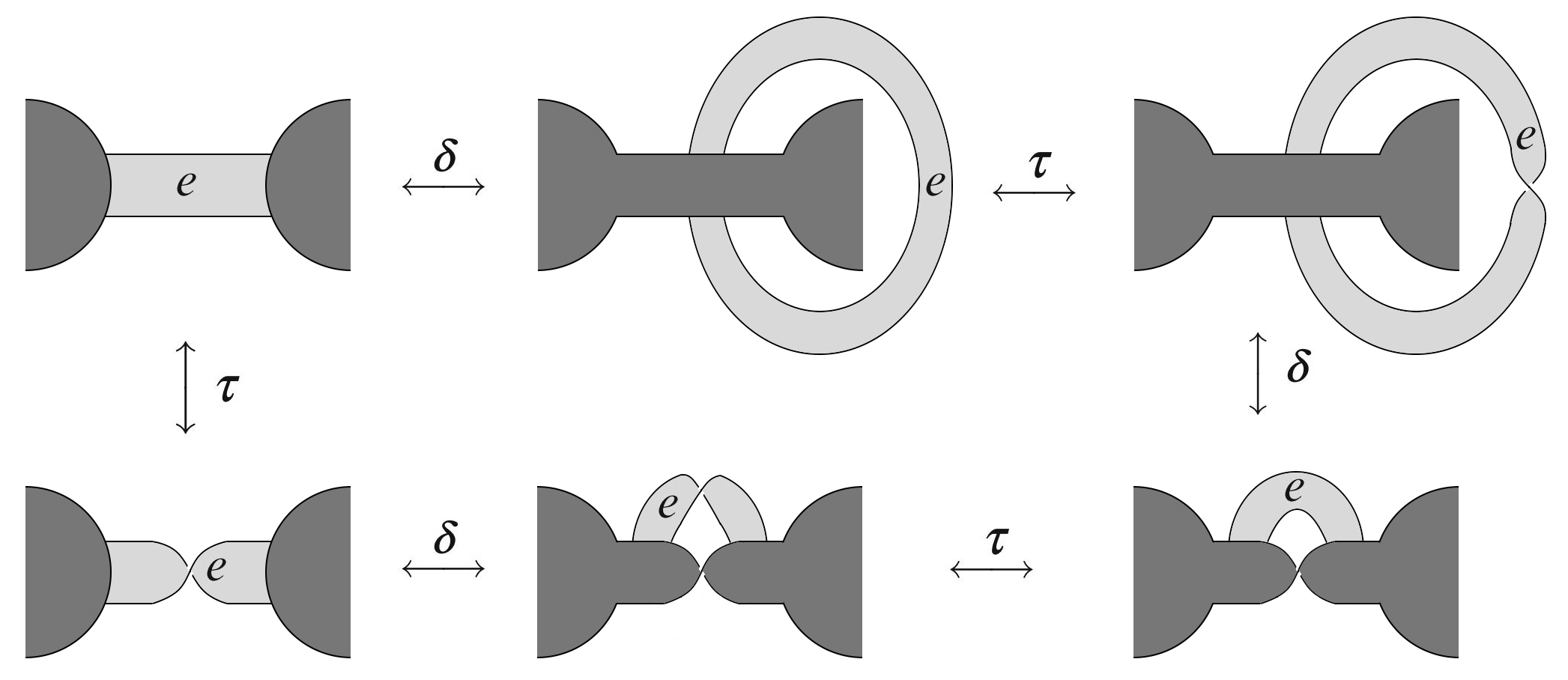}
   \caption{The actions of $\delta$ and $\tau$ on an edge $e$}
    \label{f_1}
\end{figure}

Figure \ref{f_1} illustrates the effect of applying the operations $\delta$ and $\tau$ to a single edge of a ribbon graph. Ellis-Monaghan and Moffatt \cite{EM1} proved that the partial-dual $\delta$ and partial-Petrial $\tau$ generate a symmetric group action $S_3$ on ribbon graphs, with the group presentation
\[
S_3 \cong \mathfrak{G}\coloneqq \langle \delta, \tau \mid \delta^2, \tau^2, (\delta\tau)^3 \rangle.
\]
Suppose $G$ is a ribbon graph, $A, B\subseteq E(G)$, and $\xi, \pi \in \mathfrak{G}$. Then we define $$G^{\xi(A)\pi(B)}\coloneqq(G^{\xi(A)})^{\pi(B)}.$$

\begin{proposition} [\cite{EM1}]\label{pro10}
  If, for all \( i \), we have \( \zeta_i \in \mathfrak{G} \) and \( B_i \subseteq E(G) \), then any expression of the form \( G^{\Pi \zeta_i(B_i)} \) is equal to \( G^{\Pi_{i=1}^6 \xi_i(A_i)} \), where the \( A_i \subseteq E(G) \) are pairwise disjoint with \( \bigcup_i A_i = E(G) \) and where \( \xi_1 = 1, \xi_2 = \tau, \xi_3 = \delta, \xi_4 = \tau\delta, \xi_5 = \delta\tau, \) and \( \xi_6 = \tau\delta\tau \in \mathfrak{G} \). Here, the terms in the product \( \prod \zeta_i(B_i) \) do not necessarily commute, while the terms in the product \( \prod_{i=1}^6 \xi_i(A_i) \) do commute with each other.
\end{proposition}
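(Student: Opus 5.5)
The argument rests on a locality principle: the operators $\delta(e)$ and $\tau(e)$ change the ribbon graph only near the edge $e$ (and the vertex discs it meets). As a result, operations applied to disjoint edge sets commute. Using this, I will reduce an arbitrary product $\prod \zeta_i(B_i)$ to a product of single-edge words, one word per edge.

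\textbf{Step 1: single-edge decomposition.} For any $\xi\in\mathfrak{G}$ and $A\subseteq E(G)$, I will show that $G^{\xi(A)}=G^{\prod_{e\in A}\xi(e)}$, and that the factors $\xi(e)$ for distinct $e$ commute. For $\tau$ this is immediate: adding half-twists to distinct edges are independent local changes. For $\delta$, I will use the standard fact from \cite{CG,EM1} that $G^{\delta(A)}=(G^{\delta(A\setminus e)})^{\delta(e)}$, i.e.\ partial duals compose as $G^{\delta(A)\delta(B)}=G^{\delta(A\triangle B)}$. The case of an edge $e$ and a different edge $f$ under mixed operators $\delta(e),\tau(f)$ is exactly the commutativity noted before Figure~\ref{f_1}. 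Combining these, any $\xi(A)$ with $\xi$ a word in $\delta,\tau$ factors edge by edge, and factors on different edges commute.

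\textbf{Step 2: collect per edge.} Expand each $\zeta_i(B_i)$ into single-edge factors using Step 1. By commutativity across distinct edges, I can move all factors acting on a fixed edge $e$ together while preserving their relative order. This yields $G^{\prod_{e}\omega_e(e)}$, where $\omega_e$ is the ordered product of those $\zeta_i$ with $e\in B_i$ (and $\omega_e=1$ if there are none). Relabeling the group elements is justified by the action being a genuine $S_3$-action on each single edge: $\delta(e)^2$, $\tau(e)^2$ and $(\delta\tau)^3(e)$ all act trivially. This is the content of \cite{EM1}, which I take as given; it is checked by following the local picture in Figure~\ref{f_1}. Hence each $\omega_e$ equals exactly one of $\xi_1,\dots,\xi_6$, since these are the six elements of $S_3$.

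\textbf{Step 3: regroup.} Set $A_j=\{e:\omega_e=\xi_j\}$. These sets are pairwise disjoint and their union is $E(G)$. By Step 1 again, $\prod_{e\in A_j}\xi_j(e)=\xi_j(A_j)$. The factors for different $j$ act on disjoint edge sets, so they commute, which proves the claim.

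The main obstacle is Step 1 for $\delta$: showing that partial duality is local, so that dualizing $e$ commutes with any operation on other edges even though vertex discs are redrawn globally. I would handle this by working with arrow presentations (or signed rotation systems), where $\delta(e)$ and $\tau(e)$ modify only the markings of $e$. In that model, commutativity on distinct edges is transparent.
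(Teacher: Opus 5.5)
Your argument is correct and is essentially the standard proof from \cite{EM1}; the paper quotes the result from there without giving its own proof. The argument runs as follows: single-edge operators satisfy the relations of $\mathfrak{G}\cong S_3$, and operators on distinct edges commute, so the total effect is recorded edge by edge by an element of $\mathfrak{G}$ and then regrouped into the six classes $A_1,\dots,A_6$.

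One bookkeeping point: $G^{w(A)}$ applies the rightmost letter of $w$ first, while $G^{\xi(A)\pi(B)}$ applies $\xi(A)$ first. So $\omega_e$ must be the product of the relevant $\zeta_i$ in the reverse of the order of application (e.g.\ $G^{\delta(e)\tau(e)}=G^{\tau\delta(e)}$). This does not affect the conclusion.
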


\subsection{The partial-twuality polynomial for bouquets}

We first recall the definition of the partial-twuality polynomial for a ribbon graph.

\begin{definition}[\cite{GMT2}]
For $\bullet \in \{\delta, \tau, \delta\tau, \tau\delta, \tau\delta\tau\}$,  the \emph{partial-$\bullet$ polynomial} of a  ribbon graph $G$ is defined as the generating function
$$^{\partial}\varepsilon_{G}^{\bullet}(z):=\sum_{A\subseteq E(G)}z^{\varepsilon(G^{\bullet(A)})}$$
that enumerates all partial-$\bullet$ duals of $G$ by Euler genus.
\end{definition}

A \emph{graft} \((G, L_G)\) consists of a simple graph \(G\) (a graph without loops or multiple edges) together with a vertex subset \(L_G \subseteq V(G)\) \cite{Moffatt15}. Its adjacency matrix \(\mathbf{A}_{(G, L_G)}\) is the matrix over \(\mathrm{GF}(2)\) with rows and columns indexed by \(V(G)\) defined by
\[
\bigl(\mathbf{A}_{(G, L_G)}\bigr)_{uv}=
\begin{cases}
1 & \text{if } u \neq v \text{ and } u \text{ is adjacent to } v,\\[2mm]
1 & \text{if } u = v \text{ and } u \in L_G,\\[2mm]
0 & \text{otherwise.}
\end{cases}
\]

Let $e$ be an edge of a ribbon graph $G$. If $e$ is a loop at the vertex disc $v$ and $e\cup v$ is homeomorphic
to a M\"obius band, then we call $e$ a {\it non-orientable loop}. Otherwise it is said to be an {\it orientable loop}. A \emph{bouquet} is a ribbon graph having exactly one vertex. Two loops in a bouquet are said to be \emph{interlaced} if, when travelling along the boundary of the unique vertex, their ends are encountered in an alternating order. The \emph{intersection graph} \(I(B)\) of a bouquet \(B\) is the simple graph whose vertex set is \(E(B)\), and in which two vertices (corresponding to loops of \(B\)) are adjacent if and only if the corresponding loops are interlaced in \(B\).

Given a bouquet \(B\), let \(L_{I(B)} \subseteq E(B)\) be the set of its non-orientable loops. Then the graft \(\bigl(I(B),\, L_{I(B)}\bigr)\), called the \emph{intersection graft} of \(B\).   Yan and Jin \cite{QYXJ} showed that the partial-$\delta$ polynomial of a bouquet is determined by its intersection graft. This result, proved using topological graph theory, was later extended via delta-matroid \cite{QYJ4} to all five partial-twuality operations: for any \(\bullet \in \{\delta, \tau, \delta\tau, \tau\delta, \tau\delta\tau\}\), the partial-$\bullet$ polynomial of a bouquet depends only on its intersection graft.

\begin{lemma}[\cite{MB}]\label{lem03}
Let \((I(B), L_{I(B)})\) be the intersection graft of a bouquet \(B\). Then
\[
f(B) = \operatorname{corank}\bigl( \mathbf{A}_{(I(B), L_{I(B)})} \bigr) + 1.
\]
\end{lemma}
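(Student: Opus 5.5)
I would prove Lemma~\ref{lem03} by induction on the number of loops of $B$, comparing how $f(B)$ and the corank of $\mathbf{A}=\mathbf{A}_{(I(B),L_{I(B)})}$ change when one loop is removed. In the base case $B$ has no edges: $f(B)=1$ and the corank of the empty matrix is $0$, so the identity holds. A cleaner alternative, which I would keep as a cross-check, is to describe $f(B)$ through the boundary-walk permutation of the one-vertex map. One then shows that the cycle space of the face boundary, taken over $\mathrm{GF}(2)$ on the edge set, equals the kernel of $\mathbf{A}$ plus one extra trivial relation. The inductive route is more elementary, so I would write that one.

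For the inductive step, fix a loop $e$ and examine the cases by the $(e,e)$ entry and by the neighbourhood $N(e)$ in $I(B)$.
\begin{itemize}
\item \textbf{$e$ non-orientable ($\mathbf{A}_{ee}=1$).} Consider the partial Petrial $B^{\tau(e)}$ together with local complementation. Adding or removing $e$ where $e$ is a non-orientable loop changes $f$ by $0$. Pivoting on the nonzero diagonal entry $\mathbf{A}_{ee}$ reduces $\mathbf{A}$ to $\mathbf{A}'\oplus(1)$, where $\mathbf{A}'$ is the graft of the bouquet obtained from $B$ by a suitable local operation, for example contracting $e$, i.e.\ taking $B^{\delta(e)}\setminus e$. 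The corank is unchanged, and topologically $f(B)=f(B/e)$ with $B/e$ again a bouquet. The key check is that the intersection graft of $B/e$ is exactly the Schur complement $\mathbf{A}/\mathbf{A}_{ee}$ over $\mathrm{GF}(2)$: local complementation at $e$ on $N(e)$, with the loop status of the neighbours of $e$ toggled.
\item \textbf{$e$ orientable, $N(e)=\emptyset$.} Deleting $e$ reduces $f$ by exactly $1$, since the two ends of $e$ are adjacent on the boundary walk in an unlinked way. The matrix gains a zero row and column, which raises the corank by $1$. Both sides therefore agree.
\item \textbf{$e$ orientable, $e$ interlaced with some $g$.} Use the $2\times 2$ pivot on $\{e,g\}$. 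The submatrix $\begin{pmatrix}0&1\\1&*\end{pmatrix}$ is invertible, so the corank of $\mathbf{A}$ equals the corank of its Schur complement. On the topological side, contracting the pair $e,g$ (equivalently $B^{\delta(\{e,g\})}$ followed by deleting $e,g$) preserves $f$. The Schur complement is the graft obtained by pivoting on the edge $eg$, with the diagonal updated accordingly.
\end{itemize}

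The main obstacle is the topological bookkeeping in these two contraction steps. One must verify that the new bouquet has $f$ equal to $f(B)$, and that its interlacement and orientability data are exactly the Schur complement over $\mathrm{GF}(2)$, including the diagonal entries, which record half-twists picked up by neighbouring loops. I would handle this by writing the bouquet as a signed chord word around the vertex. Then I would check directly how the word transforms: reversing the segment between the ends of $e$, or swapping segments for the pair $e,g$. Interlacement of two other chords changes exactly when both are in $N(e)$, respectively in the prescribed pivot pattern.

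The face count is the easier part, because faces are counted by the boundary permutation, and partial duality on $A$ preserves $f$ of the relevant spanning subgraph.
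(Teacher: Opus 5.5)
The paper gives no argument for Lemma~\ref{lem03}; it imports it from \cite{MB}. So your proposal has to stand on its own, and in outline it does. The three cases are exhaustive. Elimination on an invertible principal block ($\begin{pmatrix}1\end{pmatrix}$ or $\begin{pmatrix}0&1\\1&\ast\end{pmatrix}$) lowers the rank by exactly the block size, so the corank passes unchanged to the Schur complement $S$. Contraction $G/A:=G^{\delta(A)}\setminus A$ preserves $f$ for every $A$, because $(G^{\delta(A)}\setminus A)^*=G^*\setminus A$ gives $f(G/A)=v(G^*\setminus A)=v(G^*)=f(G)$. Cite this identity rather than ``partial duality preserves $f$'', which is false in general, since $f(G^{\delta(A)})=f(G\setminus A)$.

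Your ``cleaner alternative'' is essentially the usual homological proof. In the loop basis of $H_1(B;\mathrm{GF}(2))$ the mod-$2$ intersection form has Gram matrix $\mathbf{A}_{(I(B),L_{I(B)})}$, and its radical is the image of $H_1(\partial B)$, which has dimension $f(B)-1$. Compared with that, your induction is more elementary but pushes all of the content into chord-word bookkeeping.

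Two points need attention. First, the sentence ``adding or removing a non-orientable loop changes $f$ by $0$'' is false. Take the word $g\,e\,g\,e$ with $e$ twisted and $g$ untwisted: the matrix $\begin{pmatrix}1&1\\1&0\end{pmatrix}$ is non-singular and $f(B)=1$, but $B\setminus e$ is a single orientable loop with $f=2$. The claim is true when $N(e)=\emptyset$, not in general. Drop it, together with the remark on $B^{\tau(e)}$; contraction is all you actually use.

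Second, the verification you defer is the real content of the lemma. It goes through and can be shortened; write $A=\mathbf{A}_{(I(B),L_{I(B)})}$.

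\textbf{Case (a).} Write $B$ as $e\,X\,e\,Y$ with $e$ twisted. Then $B/e$ has word $X\,\overline{Y}$, i.e.\ $Y$ read backwards. This toggles the twist of exactly the chords with one end in $Y$, namely $N(e)$. It also toggles the interlacement of exactly the pairs inside $N(e)$. That is precisely $S_{uw}=A_{uw}+A_{ue}A_{ew}$, diagonal included.

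\textbf{Case (c).} If $g$ is twisted, apply case (a) to $g$ instead, so you may assume $e$ and $g$ are both orientable. Then $v(B^{\delta(\{e,g\})})=1$, and $e\,X\,g\,Y\,e\,Z\,g\,W$ contracts to $X\,W\,Z\,Y$ with no reversals. Twists are therefore unchanged, matching $S_{uu}=A_{uu}+2A_{ue}A_{ug}=A_{uu}$. Interlacement toggles exactly between chords in different classes among $N(e)\setminus N(g)$, $N(g)\setminus N(e)$ and $N(e)\cap N(g)$. This matches $S_{uw}=A_{uw}+A_{ue}A_{gw}+A_{ug}A_{ew}$.
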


\begin{proposition}\label{pro21}
Let \((I(B), L_{I(B)})\) be the intersection graft of a bouquet \(B\). Then
\[
\varepsilon(B)= \operatorname{rank}\bigl( \mathbf{A}_{(I(B), L_{I(B)})} \bigr).
\]
\end{proposition}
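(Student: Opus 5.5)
The argument is a direct computation from the definition of Euler genus, with Lemma~\ref{lem03} supplying the only nontrivial input.

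A bouquet $B$ has exactly one vertex, so $v(B)=1$. It is connected, so $c(B)=1$. Every edge of $B$ is a loop, hence $e(B)=|E(B)|$. By construction, the intersection graft $(I(B),L_{I(B)})$ has vertex set $E(B)$. Therefore the adjacency matrix $\mathbf{A}_{(I(B), L_{I(B)})}$ is an $e(B)\times e(B)$ matrix over $\mathrm{GF}(2)$.

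Now unwind the definitions:
\[
\varepsilon(B)=2c(B)-\chi(B)=2-\bigl(v(B)-e(B)+f(B)\bigr)=1+e(B)-f(B).
\]
Substituting the face count from Lemma~\ref{lem03}, $f(B)=\operatorname{corank}(\mathbf{A}_{(I(B), L_{I(B)})})+1$, gives
\[
\varepsilon(B)=e(B)-\operatorname{corank}\bigl(\mathbf{A}_{(I(B), L_{I(B)})}\bigr).
\]

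The remaining step is to identify $e(B)-\operatorname{corank}$ with the rank. We interpret corank as the dimension of the kernel, i.e.\ (size of the matrix) minus rank. For a square $n\times n$ matrix with $n=e(B)$, rank--nullity then gives $e(B)-\operatorname{corank}(\mathbf{A})=\operatorname{rank}(\mathbf{A})$, which is the claim.

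We should check the convention in \cite{MB} to be sure that corank there means nullity and not something else. We should also check the degenerate case $E(B)=\emptyset$. There the matrix is empty with rank $0$, while $f(B)=1$ and $\varepsilon(B)=0$, so the statement holds. There is no real obstacle: all the topological content is already contained in Lemma~\ref{lem03}.
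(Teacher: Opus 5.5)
Your proposal is correct and is essentially the paper's proof: set $v(B)=c(B)=1$ in Euler's formula, substitute $f(B)=\operatorname{corank}\bigl(\mathbf{A}_{(I(B),L_{I(B)})}\bigr)+1$ from Lemma~\ref{lem03}, and apply rank--nullity to the $e(B)\times e(B)$ matrix. Your additional checks of the corank convention and of the empty case $E(B)=\emptyset$ are fine but not needed.
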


\begin{proof}
    By Lemma~\ref{lem03},
\[
f(B) = \operatorname{corank}\bigl( \mathbf{A}_{(I(B), L_{I(B)})} \bigr) + 1.
\]
Since \(v(B) = 1\), Euler's formula gives
\[
\varepsilon(B) = 2 + e(B) - 1 - f(B)
              = \operatorname{rank}\bigl( \mathbf{A}_{(I(B), L_{I(B)})} \bigr).
\]
\end{proof}

Since the vertices of \(I(B)\) are in bijection with the edge set \(E(B)\) of a bouquet \(B\), for any \(F \subseteq E(B)\) we also write \(F\) for the corresponding vertex subset of \(I(B)\).

\begin{theorem}\label{lem02}
Let \( B \) be a bouquet with intersection graph \( I(B) \), \( S \) be the set of non-orientable loops of \( B \), and \( F \subseteq E(B) \). Then
\begin{enumerate}
\item[$(1)$] \(\varepsilon(B^{\tau(F)})= \operatorname{rank}\left( \mathbf{A}_{(I(B), S\Delta F)}\right),\)

\item[$(2)$] $\varepsilon(B^{\delta(F)})=\operatorname{rank}(\mathbf A_{(I(B), S)}[F])+\operatorname{rank}(\mathbf A_{(I(B), S)}[F^c]),$

\item[$(3)$] \(\varepsilon(B^{\delta\tau(F)}) = \operatorname{rank}\left( \mathbf{A}_{(I(B), S\Delta F)}[F] \right)+\operatorname{rank}\left( \mathbf{A}_{(I(B), S)}[F^c] \right),\)

\item[$(4)$] \(\varepsilon(B^{\tau\delta(F)}) = \operatorname{rank}\left( \mathbf{A}_{(I(B), S\Delta F)} \right) - \operatorname{corank}\left( \mathbf{A}_{(I(B), S)}[F] \right),\)

\item[$(5)$] \(\varepsilon(B^{\tau\delta\tau(F)}) = \operatorname{rank}\left( \mathbf{A}_{(I(B), S)} \right) - \operatorname{corank}\left( \mathbf{A}_{(I(B), S\Delta F)}[F] \right).\)
\end{enumerate}
\end{theorem}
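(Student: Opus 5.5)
The plan is to derive every formula from Proposition~\ref{pro21} and Lemma~\ref{lem03}, together with the standard counts for partial duals. For $A \subseteq E(G)$ these counts are $v(G^{\delta(A)}) = f(G\setminus A^c)$, $e(G^{\delta(A)})=e(G)$ and $f(G^{\delta(A)}) = f(G\setminus A)$; they follow from Definition~\ref{def01} and $G^{\delta(A)\delta(E)}=G^{\delta(A^c)}$. I will prove (1), (2) and (5) directly, and obtain (3) and (4) from them by a conjugation trick in $\mathfrak G$.

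For (1), $B^{\tau(F)}$ is again a bouquet with the same cyclic order of loop ends, so its intersection graph is still $I(B)$. Adding a half-twist toggles the orientability of each loop in $F$, so its non-orientable set is $S\Delta F$, and Proposition~\ref{pro21} gives (1).

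For (2), since $B$ is connected, $\varepsilon(B^{\delta(F)}) = 2 - f(B\setminus F^c) + e(B) - f(B\setminus F)$. Split $e(B)=|F|+|F^c|$ and regroup this as $\varepsilon(B\setminus F^c)+\varepsilon(B\setminus F)$, using that each sub-bouquet has one vertex. The intersection graft of the sub-bouquet $B\setminus F^c$ is the induced subgraft on $F$, so its adjacency matrix is the principal submatrix $\mathbf A_{(I(B),S)}[F]$; likewise for $F^c$. Proposition~\ref{pro21} then gives (2).

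For (3), write $B^{\delta\tau(F)} = (B^{\tau(F)})^{\delta(F)}$ and apply (2) to the bouquet $B^{\tau(F)}$, whose graft is $(I(B),S\Delta F)$ by the argument for (1). The block on $F^c$ is unchanged, because $(S\Delta F)\cap F^c = S\cap F^c$. This yields (3).

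For (4), I use $\tau\delta = (\tau\delta\tau)\tau$ in $\mathfrak G$, so $B^{\tau\delta(F)} = (B^{\tau(F)})^{\tau\delta\tau(F)}$. Once (5) is known, applying it to the bouquet $B^{\tau(F)}$ with $S' = S\Delta F$ gives $\operatorname{rank}(\mathbf A_{(I(B),S\Delta F)}) - \operatorname{corank}(\mathbf A_{(I(B),S)}[F])$, since $S'\Delta F = S$. This is exactly (4), so everything reduces to (5).

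For (5), write $X = B^{\tau\delta\tau(F)} = ((B^{\tau(F)})^{\delta(F)})^{\tau(F)}$.
\begin{itemize}
\item Vertices: the last $\tau(F)$ does not change vertices, so $v(X) = v((B^{\tau(F)})^{\delta(F)}) = f(B^{\tau(F)}\setminus F^c)$. This sub-bouquet has graft matrix $\mathbf A_{(I(B),S\Delta F)}[F]$, so Lemma~\ref{lem03} gives $v(X) = \operatorname{corank}(\mathbf A_{(I(B),S\Delta F)}[F]) + 1$.
\item Edges and components: $e(X)=e(B)$, and $X$ is connected.
\item Faces: I claim $f(X) = f(B)$, i.e.\ that $\tau\delta\tau$, the ``Wilson dual'', preserves the face set even partially. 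Granting this, Lemma~\ref{lem03} gives $f(X)=\operatorname{corank}(\mathbf A_{(I(B),S)})+1$.
\end{itemize}
Substituting into $\varepsilon = 2 - v + e - f$ and using $e(B) = \operatorname{rank}+\operatorname{corank}$ of $\mathbf A_{(I(B),S)}$ gives (5).

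The main obstacle is the face claim $f(X)=f(B)$. My first attempt is to use $\tau\delta\tau=\delta\tau\delta$, so that $X = ((B^{\delta(F)})^{\tau(F)})^{\delta(F)}$. Then $f(X) = f\bigl(((B^{\delta(F)})^{\tau(F)})\setminus F\bigr) = f(B^{\delta(F)}\setminus F)$, because the twists sit only on deleted edges. The remaining step is $f(G^{\delta(F)}\setminus F) = f(G)$: the faces of a partial dual after deleting the dualized edges are the original faces. I would verify this by tracing boundary components on the arrow presentation, since each edge of $F$ contributes boundary arcs to exactly one side. If that argument is awkward, the fallback is to check the face count for the single-edge operation in Figure~\ref{f_1} and then induct on $|F|$, using that the operations on distinct edges commute.
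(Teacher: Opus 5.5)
Your argument is correct. It rests on the same mechanism as the paper's proof: Euler's formula, with $v$ and $f$ of the twisted dual realized as boundary counts of bouquets and converted to coranks by Lemma~\ref{lem03}. You organize it differently, though.

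The paper treats $\delta,\delta\tau,\tau\delta,\tau\delta\tau$ one at a time. It reads off single-edge identities such as $f(B^{\tau\delta\tau(e)})=f(B)$ from Figure~\ref{f_2}, extends them to $F$ by an informal claim that effects on disjoint edge sets ``are independent and can be composed'', and obtains $v(B^{\sigma})$ as $f((B^{\sigma})^*)$ after rewriting $\sigma\delta(E)$ in standard form.

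You do topology only for (2) and (5), and obtain (3) and (4) formally by applying (2) and (5) to $B^{\tau(F)}$, whose graft is $(I(B),S\Delta F)$. This is exactly the $S\leftrightarrow S\Delta F$ symmetry between rows of the paper's $(T_1,J_1,T_2,J_2)$ table. Your vertex count in (5) is also more direct than passing through the dual.

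The gain is that your only topological input is the pair of counts $v(G^{\delta(A)})=f(G\setminus A^c)$ and $f(G^{\delta(A)})=f(G\setminus A)$ for arbitrary ribbon graphs. These replace the paper's figure-based extension from single edges to sets, which is the least rigorous step of its proof.

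The step you flag as the main obstacle needs neither arrow presentations nor induction. Set $H=G^{\delta(F)}$; then $H^{\delta(F)}=G$. Your own identity $f(H^{\delta(A)})=f(H\setminus A)$ with $A=F$ gives $f(G)=f(G^{\delta(F)}\setminus F)$ at once. Combined with your reduction via $\tau\delta\tau=\delta\tau\delta$, this yields $f(B^{\tau\delta\tau(F)})=f(B^{\delta(F)}\setminus F)=f(B)$, which completes (5) and hence (4).
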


\begin{proof}
 We first prove (1) separately, as it follows directly from Proposition~\ref{pro21}
\[
\varepsilon(B^{\tau(F)})  = \operatorname{rank}\bigl( \mathbf{A}_{(I(B), S\Delta F)} \bigr).
\]

\begin{figure}
    \centering
\includegraphics[width=0.53\linewidth]{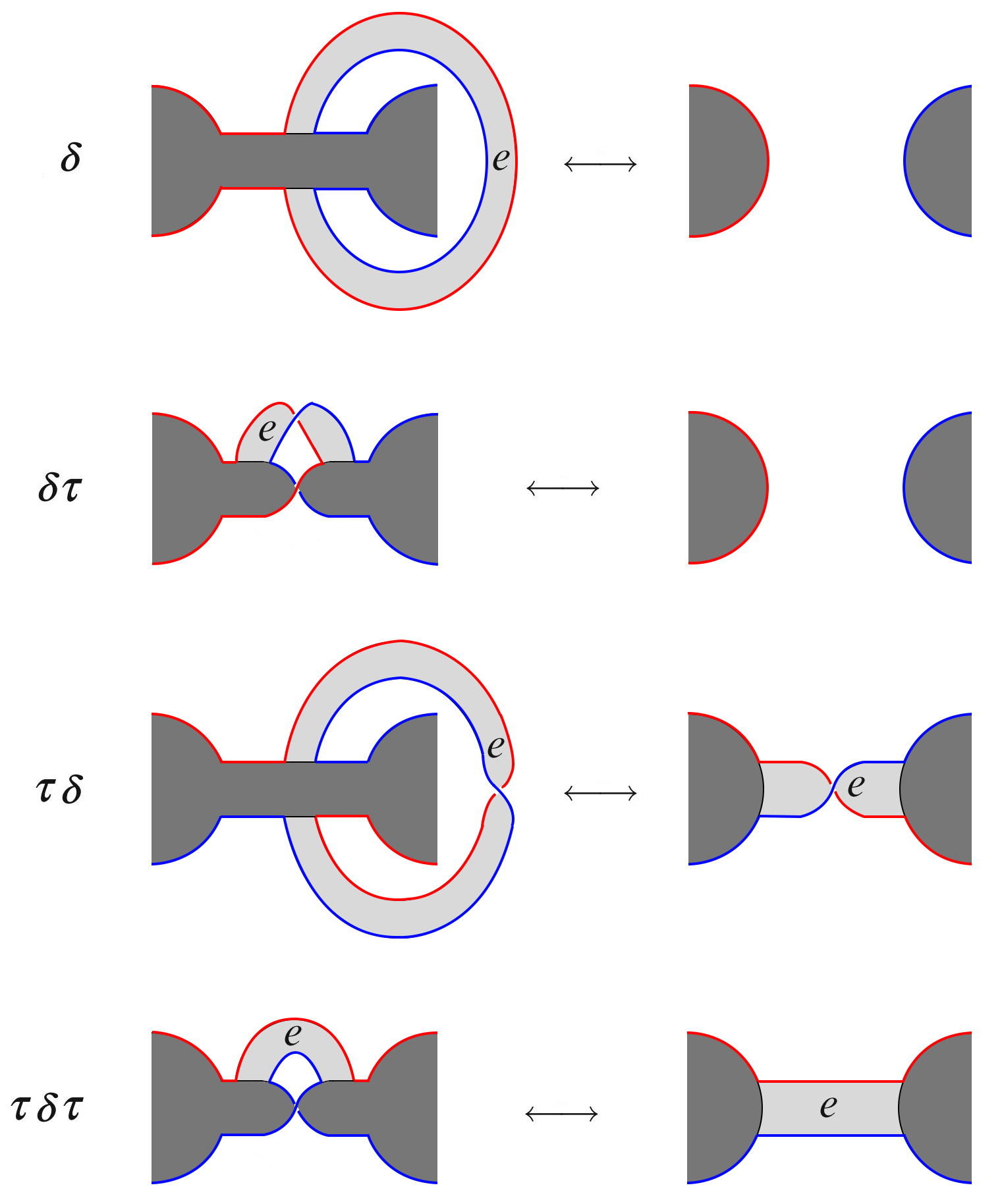}
    \caption{Proof of Theorem \ref{lem02}}
    \label{f_2}
\end{figure}

Set $n = |E(B)|$. For the remaining four statements, the proof follows a common pattern. First, consider the case of a single edge \(e\in E(B)\). By Figure~\ref{f_1}, the operations satisfy
\[
f(B^{\delta(e)})=f(B^{\delta\tau(e)})=f(B \setminus e),\quad
f(B^{\tau\delta(e)})=f(B^{\tau(e)}),\quad
f(B^{\tau\delta\tau(e)})=f(B),
\]
as illustrated in Figure~\ref{f_2} where the transformations of the boundary components under each operation are shown.

For a subset \(F \subseteq E(B)\) and \(\sigma\in \{\delta(F),  \delta\tau(F), \tau\delta(F), \tau\delta\tau(F)\}\), we determine two bouquets \(B_1\) and \(B_2\) such that \(f(B^\sigma) = f(B_1)\) and \(v(B^\sigma) =f((B^\sigma)^*)= f(B_2)\). The bouquet \(B_1\) is obtained by directly extending the single edge identities to the set \(F\). To obtain \(B_2\), we apply Proposition \ref{pro10} to the ribbon graph \((B^\sigma)^*\). Since \((B^\sigma)^* = B^{\sigma\delta(E)}\), we simplify the operator word \(\sigma\delta(E)\) using the group relations to express it in the standard form where the six basic operations act on pairwise disjoint subsets. This yields:

\begin{itemize}
    \item For \(\sigma = \delta(F)\):
          \((B^{\delta(F)})^* = B^{\delta(F)\delta(E)} = B^{\delta(F)\delta(F)\delta(F^c)} = B^{\delta(F^c)}\).

    \item For \(\sigma = \delta\tau(F)\):
          \((B^{\delta\tau(F)})^* = B^{\delta\tau(F)\delta(E)} = B^{\delta\tau(F)\delta(F)\delta(F^c)} = B^{\tau(F)\delta(F^c)}\).

    \item For \(\sigma = \tau\delta(F)\):
          \((B^{\tau\delta(F)})^* = B^{\tau\delta(F)\delta(E)} = B^{\tau\delta(F)\delta(F)\delta(F^c)} = B^{\delta\tau\delta(F)\delta(F^c)}\).

    \item For \(\sigma = \tau\delta\tau(F)\):
          \((B^{\tau\delta\tau(F)})^* = B^{\tau\delta\tau(F)\delta(E)} = B^{\tau\delta\tau(F)\delta(F)\delta(F^c)} = B^{\tau\delta(F)\delta(F^c)}\).
\end{itemize}

Now we determine $B_2$ for each case by applying the single edge identities to the standard forms obtained above.  Since the operations act on disjoint edge sets $F$ and $F^c$, their effects on the number of boundary components are independent and can be composed.

\begin{itemize}
    \item For \(\sigma = \delta(F)\): We have $(B^{\delta(F)})^* = B^{\delta(F^c)}$.
          For a single edge $e$, $f(B^{\delta(e)}) = f(B\setminus e)$.  This identity extends to the whole set $F^c$, giving $f(B^{\delta(F^c)}) = f(B\setminus F^c)$.
          Hence we may take $B_2 = B\setminus F^c$.

    \item For \(\sigma = \delta\tau(F)\): We have $(B^{\delta\tau(F)})^* = B^{\tau(F)\delta(F^c)}$.
          Applying the identity for $\delta$  to the set $F^c$ yields $f(B^{\tau(F)\delta(F^c)}) = f(B^{\tau(F)}\setminus F^c)$.
          Therefore $B_2 = B^{\tau(F)}\setminus F^c$.

    \item For \(\sigma = \tau\delta(F)\):  We have $(B^{\tau\delta(F)})^* = B^{\delta\tau\delta(F)\delta(F^c)}$.
          Using the group relation $\delta\tau\delta = \tau\delta\tau$, this equals $B^{\tau\delta\tau(F)\delta(F^c)}$.
          For a single edge $e$, $f(B^{\tau\delta\tau(e)}) = f(B)$; extending to the set $F$ gives $f(B^{\tau\delta\tau(F)}) = f(B)$.
          Consequently, $f(B^{\tau\delta\tau(F)\delta(F^c)}) = f(B\setminus F^c)$, and we set $B_2 = B\setminus F^c$.

    \item For \(\sigma = \tau\delta\tau(F)\): We have $(B^{\tau\delta\tau(F)})^* = B^{\tau\delta(F)\delta(F^c)}$.
          For a single edge $e$, $f(B^{\tau\delta(e)}) = f(B^{\tau(e)})$; hence for the set $F$ we have $f(B^{\tau\delta(F)}) = f(B^{\tau(F)})$.
          It follows that $f(B^{\tau\delta(F)\delta(F^c)}) = f(B^{\tau(F)}\setminus F^c)$, so $B_2 = B^{\tau(F)}\setminus F^c$.
\end{itemize}
The results are summarized in the following table:

\[
\begin{array}{c|c|c}
\sigma & B_1 & B_2 \\ \hline
\delta(F)      & B \setminus F          & B \setminus F^c \\
\delta\tau(F)  & B \setminus F          & B^{\tau(F)} \setminus F^c \\
\tau\delta(F)  & B^{\tau(F)}    & B \setminus F^c \\
\tau\delta\tau(F) & B        & B^{\tau(F)} \setminus F^c.
\end{array}
\]
Applying Lemma~\ref{lem03} to $B_1$ and $B_2$ yields
\[
f(B^\sigma) = \operatorname{corank}\bigl( \mathbf{A}_{(I(B), T_1)}[J_1] \bigr) + 1, \qquad
v(B^\sigma) = \operatorname{corank}\bigl( \mathbf{A}_{(I(B), T_2)}[J_2] \bigr) + 1,
\]
where the pairs $(T_1,J_1)$ and $(T_2,J_2)$ are given by
\[
\begin{array}{c|cc|cc}
\sigma & T_1 & J_1 & T_2 & J_2 \\ \hline
\delta(F)      & S & F^c        & S & F \\
\delta\tau(F)  & S & F^c        & S\Delta F & F \\
\tau\delta(F)  & S\Delta F & E(B) & S & F \\
\tau\delta\tau(F) & S & E(B) & S\Delta F & F.
\end{array}
\]
Recall that $\mathbf{A}_{(I(B),T_1)}[E(B)] = \mathbf{A}_{(I(B),T_1)}$.
Since $e(B^\sigma)=n$, Euler's formula gives
\[
\begin{aligned}
\varepsilon(B^\sigma) &= 2+n-v(B^\sigma)-f(B^\sigma)\\
&= n - \operatorname{corank}\bigl( \mathbf{A}_{(I(B), T_2)}[J_2] \bigr) - \operatorname{corank}\bigl( \mathbf{A}_{(I(B), T_1)}[J_1] \bigr).
\end{aligned}
\]

\item [{\bf (2)}]   For $\sigma=\delta(F)$,
\[
\begin{aligned}
\varepsilon(B^{\delta(F)})
&= n - \operatorname{corank}(\mathbf{A}_{(I(B), S)}[F]) - \operatorname{corank}(\mathbf{A}_{(I(B), S)}[F^c]) \\
&= \operatorname{rank}(\mathbf{A}_{(I(B), S)}[F]) + \operatorname{rank}(\mathbf A_{(I(B), S)}[F^c]).
\end{aligned}
\]

\item [{\bf (3)}]   For $\sigma=\delta\tau(F)$,
\[
\begin{aligned}
\varepsilon(B^{\delta\tau(F)})
&= n - \operatorname{corank}(\mathbf{A}_{(I(B), S\Delta F)}[F]) - \operatorname{corank}(\mathbf{A}_{(I(B), S)}[F^c]) \\
&= \operatorname{rank}(\mathbf{A}_{(I(B), S\Delta F)}[F])+\operatorname{rank}(\mathbf A_{(I(B), S)}[F^c]).
\end{aligned}
\]

\item [{\bf (4)}]   For $\sigma=\tau\delta(F)$,
\[
\begin{aligned}
\varepsilon(B^{\tau\delta(F)})
&= n  - \operatorname{corank}(\mathbf{A}_{(I(B), S)}[F])- \operatorname{corank}(\mathbf{A}_{(I(B), S\Delta F)}) \\
&= \operatorname{rank}(\mathbf{A}_{(I(B), S\Delta F)}) - \operatorname{corank}(\mathbf A_{(I(B), S)}[F]).
\end{aligned}
\]

\item [{\bf (5)}]   For $\sigma=\tau\delta\tau(F)$,
\[
\begin{aligned}
\varepsilon(B^{\tau\delta\tau(F)})
&= n  - \operatorname{corank}(\mathbf{A}_{(I(B), S\Delta F)}[F])- \operatorname{corank}(\mathbf{A}_{(I(B), S)}) \\
&= \operatorname{rank}(\mathbf{A}_{(I(B), S)}) - \operatorname{corank}(\mathbf A_{(I(B), S\Delta F)}[F]).
\end{aligned}
\]
This completes the proof of all five statements.
\end{proof}

\subsection{Partial-twuality polynomials of matrices}

Theorem~\ref{lem02} expresses the Euler genus of a bouquet under a partial-twuality operation as a linear combination of ranks and/or coranks of  adjacency matrices of related intersection grafts. For a graft $(G, L_G)$, its adjacency matrix over $\mathrm{GF}(2)$ satisfies the following relation for any subset $A \subseteq V(G)$:
\[
\mathbf{A}_{(G,\,L_G \Delta A)} = \mathbf{A}_{(G,L_G)} + I_A,
\]
where $I_A$ is the diagonal matrix with $(I_A)_{ii}=1$ for $i \in A$ and $0$ otherwise. Therefore, the five Euler genus formulas can be rewritten solely in terms of ranks and/or coranks of matrices and of matrices modified by $I_A$. These new expressions are formulated entirely within the language of matrix operations and are thus well-defined for any square matrix over any field.

\begin{definition}\label{maindef}
Let \(M\) be a square matrix over a field, with rows and columns indexed by a finite set \(V\). For $\bullet \in \{ \delta, \tau, \delta\tau, \tau\delta, \tau\delta\tau\}$, the \emph{partial-$\langle\bullet\rangle$ polynomial} of \(M\) is defined as
\[
P_{\langle \bullet \rangle}(M, z) = \sum_{A \subseteq V} z^{\,r_{\langle \bullet \rangle}(M, A)},
\]
where the exponent function \(r_{\langle \bullet \rangle}(M, A)\) is given by
\[
\begin{aligned}
r_{\langle \delta \rangle}(M, A) &= \operatorname{rank}\!\bigl(M[A]\bigr) + \operatorname{rank}\!\bigl(M[A^c]\bigr),\\
r_{\langle \tau \rangle}(M, A) &= \operatorname{rank}(M + I_A),\\
r_{\langle \delta\tau \rangle}(M, A) &= \operatorname{rank}\!\bigl((M + I_A)[A]\bigr) + \operatorname{rank}\!\bigl(M[A^c]\bigr),\\
r_{\langle \tau\delta \rangle}(M, A) &= \operatorname{rank}(M + I_A) - \operatorname{corank}\!\bigl(M[A]\bigr),\\
r_{\langle \tau\delta\tau \rangle}(M, A) &= \operatorname{rank}(M) - \operatorname{corank}\!\bigl((M + I_A)[A]\bigr).
\end{aligned}
\]
Here \(M[A]\) denotes the principal submatrix of \(M\) on \(A \subseteq V\), and \(A^c = V \setminus A\). In particular, if $M$ is the $0 \times 0$ matrix (the empty matrix), then $P_{\langle\bullet\rangle}(M,z)=1$.
\end{definition}

\begin{definition}\label{graph-polynomial-def}
For $\bullet \in \{ \delta, \tau, \delta\tau, \tau\delta, \tau\delta\tau\}$, the \emph{partial-$\langle \bullet \rangle$ polynomial} of a graft $(G,L_G)$ is
\[
P_{(G,L_G)}^{\langle \bullet \rangle}(z) := P_{\langle \bullet \rangle}(\mathbf A_{(G,L_G)}, z).
\]
\end{definition}

Substituting $\mathbf A_{(G,L_G)} + I_A = \mathbf A_{(G, L_G \Delta A)}$ into Definition~\ref{maindef} with $M = \mathbf A_{(G,L_G)}$ yields the following expansions for the partial-$\langle \bullet \rangle$ polynomials of a graft:

\begin{proposition}\label{pro05}
For $\bullet \in \left\{\delta, \tau, \delta\tau, \tau\delta, \tau\delta\tau\right\}$, the partial-$\langle \bullet \rangle$ polynomial of a graft $(G,L_G)$ is
\begin{enumerate}
\item[$(1)$] $\displaystyle
P_{(G,L_G)}^{\langle \delta \rangle}(z) = \sum_{A \subseteq V(G)}
z^{\operatorname{rank}(\mathbf A_{(G,L_G)}[A])+\operatorname{rank}(\mathbf A_{(G,L_G)}[A^c])},$

\item[$(2)$] $\displaystyle
P_{(G,L_G)}^{\langle \tau \rangle}(z) = \sum_{A \subseteq V(G)}
z^{\operatorname{rank}(\mathbf A_{(G,L_G\Delta A)})},$

\item[$(3)$] $\displaystyle
P_{(G,L_G)}^{\langle \delta\tau \rangle}(z) = \sum_{A \subseteq V(G)}
z^{\operatorname{rank}(\mathbf A_{(G,L_G\Delta A)}[A])+\operatorname{rank}(\mathbf A_{(G,L_G)}[A^c])},$

\item[$(4)$] $\displaystyle
P_{(G,L_G)}^{\langle \tau\delta \rangle}(z) = \sum_{A \subseteq V(G)}
z^{\operatorname{rank}(\mathbf A_{(G,L_G\Delta A)})-\operatorname{corank}(\mathbf A_{(G,L_G)}[A])},$

\item[$(5)$] $\displaystyle
P_{(G,L_G)}^{\langle \tau\delta\tau \rangle}(z) = \sum_{A \subseteq V(G)}
z^{\operatorname{rank}(\mathbf A_{(G,L_G)})-\operatorname{corank}(\mathbf A_{(G,L_G\Delta A)}[A])}.$
\end{enumerate}
\end{proposition}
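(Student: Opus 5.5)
The plan is a direct substitution into Definition~\ref{maindef}, via Definition~\ref{graph-polynomial-def}, with $M=\mathbf A_{(G,L_G)}$ over $\mathrm{GF}(2)$. The only fact needed is the identity already recorded before Definition~\ref{maindef}: for every $A\subseteq V(G)$,
\[
\mathbf A_{(G,L_G)}+I_A=\mathbf A_{(G,L_G\Delta A)}.
\]
It holds entrywise. The off-diagonal entries of both sides record adjacency in $G$ and are unchanged, since $I_A$ is diagonal. The diagonal entry at $v$ is $[v\in L_G]+[v\in A] \pmod 2 = [v\in L_G\Delta A]$.

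Next I would note that principal submatrices commute with adding a diagonal matrix. Hence
\[
(\mathbf A_{(G,L_G)}+I_A)[A]=\mathbf A_{(G,L_G\Delta A)}[A].
\]
Moreover, $\mathbf A_{(G,L_G)}[X]$ is literally the same matrix in both descriptions for every $X\subseteq V(G)$.

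Then I would go through the five exponent functions in turn.
\begin{itemize}
\item[(1)] $r_{\langle\delta\rangle}$ involves only $M[A]$ and $M[A^c]$, so it is unchanged.
\item[(2)] For $r_{\langle\tau\rangle}$, replace $M+I_A$ by $\mathbf A_{(G,L_G\Delta A)}$.
\item[(3)] For $r_{\langle\delta\tau\rangle}$, replace $(M+I_A)[A]$ by $\mathbf A_{(G,L_G\Delta A)}[A]$.
\item[(4)] For $r_{\langle\tau\delta\rangle}$, replace $\operatorname{rank}(M+I_A)$ by $\operatorname{rank}(\mathbf A_{(G,L_G\Delta A)})$ and keep $\operatorname{corank}(M[A])$.
\item[(5)] For $r_{\langle\tau\delta\tau\rangle}$, keep $\operatorname{rank}(M)$ and rewrite the corank term as $\operatorname{corank}(\mathbf A_{(G,L_G\Delta A)}[A])$.
\end{itemize}
Summing $z^{r}$ over all $A\subseteq V(G)$ gives formulas (1)--(5).

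There is no real obstacle. The only point needing care is that the identity uses arithmetic in $\mathrm{GF}(2)$, which is why the graft's adjacency matrix is taken over $\mathrm{GF}(2)$. The empty-matrix convention is consistent with this, since both sides equal $1$ when $V(G)=\emptyset$.
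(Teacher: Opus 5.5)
Your proposal is correct and follows the paper's argument: set $M=\mathbf A_{(G,L_G)}$ and substitute the $\mathrm{GF}(2)$ identity $\mathbf A_{(G,L_G)}+I_A=\mathbf A_{(G,L_G\Delta A)}$ into Definition~\ref{maindef}. Your entrywise check of that identity, and your remark that taking principal submatrices commutes with adding $I_A$, make explicit what the paper leaves implicit.
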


\begin{remark}\label{remark1}
{\bf(1)}
For any graft $(G,L_G)$, the partial-$\langle \tau \rangle$ polynomial satisfies
\[
P_{(G,L_G)}^{\langle \tau \rangle}(z) = P_{(G,\emptyset)}^{\langle \tau \rangle}(z),
\]
since the change of summation variable $B = L_G \mathbin{\Delta} A$ yields
\[
P_{(G,L_G)}^{\langle \tau \rangle}(z) = \sum_{A \subseteq V(G)} z^{\operatorname{rank}(\mathbf A_{(G,L_G\Delta A)})}
= \sum_{B \subseteq V(G)} z^{\operatorname{rank}(\mathbf A_{(G,B)})}
= P_{(G,\emptyset)}^{\langle \tau \rangle}(z).
\]
Thus the polynomial depends only on the underlying graph $G$, not on the subset $L_G$.

\noindent{\bf(2)}~ The partial-$\langle \tau\delta\tau \rangle$ polynomial is closely related to the interlace polynomial of a graft \cite{Arratia2004}.
For a graft $(G,L_G)$, the interlace polynomial is defined as
\[
q((G,L_G),x)=\sum_{U \subseteq V(G)} (x-1)^{\operatorname{corank}(\mathbf A_{(G,L_G)}[U])}.
\]
Then one can verify that
\[
P_{(G,L_G)}^{\langle \tau\delta\tau \rangle}(z)
= z^{\operatorname{rank}(\mathbf A_{(G,L_G)})}\,
  q\!\left((G,\,V(G)\setminus L_G),\ 1+\frac{1}{z}\right).
\]
\end{remark}

\begin{theorem}\label{thm:main-equivalence}
Let \((I(B), L_{I(B)})\) be the intersection graft of a bouquet \(B\).
Then for $\bullet \in \{\delta, \tau, \delta\tau, \tau\delta, \tau\delta\tau\}$,
\[
{}^{\partial}\varepsilon^{\bullet}_{B}(z) \;=\; P^{\langle \bullet \rangle}_{(I(B), L_{I(B)})}(z).\]
\end{theorem}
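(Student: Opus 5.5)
The plan is to compare the two generating functions term by term over the same index set. By the convention fixed before Theorem~\ref{lem02}, subsets $F\subseteq E(B)$ are identified with vertex subsets of $I(B)$. So both ${}^{\partial}\varepsilon^{\bullet}_{B}(z)$ and $P^{\langle \bullet \rangle}_{(I(B), L_{I(B)})}(z)$ are sums over the same family of subsets $F$. It then suffices to show that, for each $\bullet$ and each $F$, the exponents agree:
\[
\varepsilon(B^{\bullet(F)}) = r_{\langle\bullet\rangle}\bigl(\mathbf A_{(I(B),L_{I(B)})},F\bigr).
\]

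Set $S=L_{I(B)}$ and $M=\mathbf A_{(I(B),S)}$, working over $\mathrm{GF}(2)$. The one algebraic input is the identity $M+I_F=\mathbf A_{(I(B),S\Delta F)}$ recorded before Definition~\ref{maindef}. I will also use that taking principal submatrices commutes with adding $I_F$, so that $(M+I_F)[F]=\mathbf A_{(I(B),S\Delta F)}[F]$. Substituting into Definition~\ref{maindef} gives exactly the five expansions of Proposition~\ref{pro05}. I then check each case against Theorem~\ref{lem02}:
\begin{itemize}
\item For $\tau$, the exponent $\operatorname{rank}(\mathbf A_{(I(B),S\Delta F)})$ is item (1).
\item For $\delta$, the sum of the ranks of $M[F]$ and $M[F^c]$ is item (2).
\item For $\delta\tau$, the exponent $\operatorname{rank}(\mathbf A_{(I(B),S\Delta F)}[F])+\operatorname{rank}(M[F^c])$ is item (3).
\item For $\tau\delta$, the exponent $\operatorname{rank}(\mathbf A_{(I(B),S\Delta F)})-\operatorname{corank}(M[F])$ is item (4).
\item For $\tau\delta\tau$, the exponent $\operatorname{rank}(M)-\operatorname{corank}(\mathbf A_{(I(B),S\Delta F)}[F])$ is item (5).
\end{itemize}
Summing $z^{\varepsilon(B^{\bullet(F)})}$ over all $F\subseteq E(B)$ then yields the claimed equality.

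There is essentially no obstacle once Theorem~\ref{lem02} is available. The only points needing care are that the matrices are over $\mathrm{GF}(2)$, so that $+I_F$ really toggles the diagonal membership in $S$, and that the degenerate cases $F=\emptyset$ and $F=E(B)$ fit the formulas. For these I will take the empty principal submatrix to have rank and corank $0$, which matches Lemma~\ref{lem03} for the empty bouquet, where $f=1$. If $B$ has no edges, both sides equal $1$ by the convention in Definition~\ref{maindef}.
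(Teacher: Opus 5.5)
Your proposal is correct and matches the paper's own proof. Both identify $F\subseteq E(B)$ with a vertex subset of $I(B)$, use $\mathbf A_{(I(B),S)}+I_F=\mathbf A_{(I(B),S\Delta F)}$ to pass from Definition~\ref{maindef} to Proposition~\ref{pro05}, and match each exponent term by term with the corresponding item of Theorem~\ref{lem02}; your remarks on empty principal submatrices and the edgeless bouquet only make explicit what the paper leaves implicit.
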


\begin{proof}
For each $\bullet$ and each $A\subseteq E(B)$, the exponent $\varepsilon(B^{\bullet(A)})$ in the definition of $^{\partial}\varepsilon^{\bullet}_{B}(z)$ is given by Theorem~\ref{lem02}, which is precisely the exponent in the corresponding term of $P^{\langle\bullet\rangle}_{(I(B), L_{I(B)})}(z)$ as shown in Proposition~\ref{pro05}. Hence the two sums are equal.
\end{proof}

\begin{remark}
When $\bullet = \delta$ and the bouquet $B$ is orientable (so that $L_{I(B)} = \emptyset$), Theorem~\ref{thm:main-equivalence} specializes to
\[
{}^{\partial}\varepsilon^{\delta}_{B}(z) \;=\; P^{\langle \delta \rangle}_{(I(B),\,\emptyset)}(z),
\]
which is precisely the identity established by Cheng~\cite{Cheng} for orientable bouquets.  Starting from this case, Cheng \cite{Cheng} defined the partial-$\langle \delta \rangle$ polynomial for simple graphs. Our Definition~\ref{graph-polynomial-def} thus encompasses Cheng's definition and further extends it to the other four twuality operations.
\end{remark}

\subsection{Examples}

To illustrate the definitions, we compute the partial-$\langle\tau\delta\tau\rangle$, partial-$\langle\delta\tau\rangle$ and partial-$\langle\tau\delta\rangle$ polynomials for the complete graph $K_n$ regarded as the graft $(K_n, \emptyset)$. The partial-$\langle\delta\rangle$ polynomial for this graft was computed by Cheng~\cite{Cheng} and the partial-$\langle\tau\rangle$ polynomial for this graft was computed by Yan and Li~\cite{QYL}.

\begin{example}\label{ex:Kn}
Consider the complete graph $K_n$ on $n$ vertices.
Let $M = \mathbf A_{(K_n,\emptyset)}$ be its adjacency matrix (all off-diagonal entries are $1$, diagonal entries are $0$).
It is known that
\[
\operatorname{rank}(M)= \begin{cases}
n, & \text{if } n \text{ is even},\\[2mm]
n-1, & \text{if } n \text{ is odd}.
\end{cases}
\]
For any subset $A\subseteq V(K_n)$, the principal submatrix $(M+I_A)[A]$ coincides with the adjacency matrix of the graft $(K_{|A|},A)$, i.e.,
$\mathbf A_{(K_{|A|},A)}$.
For $A\neq\emptyset$, $\operatorname{rank}\bigl(\mathbf A_{(K_{|A|},A)}\bigr)=1$; for $A=\emptyset$ the rank is $0$.
Note that \[
\operatorname{rank}(\mathbf A_{(K_n, A)}) =
\begin{cases}
n, & A=\emptyset,\ n\ \text{even},\\
n-1, & A=\emptyset,\ n\ \text{odd},\\
n-|A|+1, & A\neq\emptyset.
\end{cases}
\]

\begin{itemize}
  \item \textbf{Partial-$\langle\tau\delta\tau\rangle$ polynomial.}
    \begin{itemize}
      \item If $n$ is even,
        \begin{align*}
          P_{(K_n,\emptyset)}^{\langle\tau\delta\tau\rangle}(z)
          &=\sum_{A\subseteq V(K_n)} z^{\,n-|A|+\operatorname{rank}(\mathbf A_{(K_{|A|},A)})} \\
          &=z^{n} +\sum_{\substack{A\subseteq V(K_n)\\A\neq\emptyset}}\binom{n}{|A|}z^{\,n+1-|A|} \\
          &=z(1+z)^{n}+z^{n}-z^{n+1}.
        \end{align*}
      \item If $n$ is odd,
        \begin{align*}
          P_{(K_n,\emptyset)}^{\langle\tau\delta\tau\rangle}(z)
          &=\sum_{A\subseteq V(K_n)} z^{\,n-1-|A|+\operatorname{rank}(\mathbf A_{(K_{|A|},A)})} \\
          &=z^{n-1} +\sum_{\substack{A\subseteq V(K_n)\\A\neq\emptyset}}\binom{n}{|A|}z^{\,n-|A|} \\
          &=(1+z)^{n}+z^{n-1}-z^{n}.
        \end{align*}
    \end{itemize}

  \item \textbf{Partial-$\langle\delta\tau\rangle$ polynomial.}
    Recall that
    \[
    r_{\langle\delta\tau\rangle}(M,A)=\operatorname{rank}\!\bigl(\mathbf A_{(K_{|A|},A)}\bigr)
      +\operatorname{rank}\!\bigl(\mathbf A_{(K_{|A^{c}|},\emptyset)}\bigr).
    \]
    The matrix $\mathbf A_{(K_{|A^{c}|},\emptyset)}$ has zero diagonal and all one off-diagonal; its rank is $|A^{c}|$ when $|A^{c}|$ is even and $|A^{c}|-1$ when $|A^{c}|$ is odd.
    \begin{itemize}
      \item If $n$ is even,
        \begin{align*}
          P_{(K_n,\emptyset)}^{\langle\delta\tau\rangle}(z)
          &=z^{n}
            +\sum_{\substack{A\neq\emptyset\\|A|\text{ even}}}\binom{n}{|A|}z^{1+|A^{c}|}
            +\sum_{\substack{A\neq\emptyset\\|A|\text{ odd}}}\binom{n}{|A|}z^{|A^{c}|} \\[1mm]
          &=z^{n}+z\frac{(1+z)^{n}+(1-z)^{n}}{2}-z^{n+1}
            +\frac{(1+z)^{n}-(1-z)^{n}}{2} \\[1mm]
          &=z^{n}+\frac{(1+z)^{\,n+1}-(1-z)^{\,n+1}}{2}-z^{n+1}.
        \end{align*}
      \item If $n$ is odd,
        \begin{align*}
          P_{(K_n,\emptyset)}^{\langle\delta\tau\rangle}(z)
          &=z^{n-1}
            +\sum_{\substack{A\neq\emptyset\\|A|\text{ odd}}}\binom{n}{|A|}z^{1+|A^{c}|}
            +\sum_{\substack{A\neq\emptyset\\|A|\text{ even}}}\binom{n}{|A|}z^{|A^{c}|} \\[1mm]
          &=z^{n-1}+z\frac{(1+z)^{n}+(1-z)^{n}}{2}
            +\frac{(1+z)^{n}-(1-z)^{n}}{2}-z^{n} \\[1mm]
          &=z^{n-1}+\frac{(1+z)^{\,n+1}-(1-z)^{\,n+1}}{2}-z^{n}.
        \end{align*}
    \end{itemize}

   \item \textbf{Partial-$\langle\tau\delta\rangle$ polynomial.}
  For a subset $A\subseteq V(K_n)$, we have
  \[
  r_{\langle\tau\delta\rangle}(M,A)=\operatorname{rank}\!\bigl(\mathbf A_{(K_n,A)}\bigr)-\operatorname{corank}\!\bigl(\mathbf A_{(K_n,\emptyset)}[A]\bigr).
  \]
  Thus
  \[
  r_{\langle\tau\delta\rangle}(M,A)=\begin{cases}
      n,        & A=\emptyset,\ n\ \text{even},\\[1mm]
      n-1,      & A=\emptyset,\ n\ \text{odd},\\[1mm]
      n-|A|+1,    & A\neq\emptyset,\ |A|\ \text{even},\\[1mm]
      n-|A|,      & A\neq\emptyset,\ |A|\ \text{odd}.
    \end{cases}
  \]

  \begin{itemize}
    \item If $n$ is even,
      \begin{align*}
        P_{(K_n,\emptyset)}^{\langle\tau\delta\rangle}(z)
        &=z^{n}
          +\sum_{\substack{A\neq\emptyset\\ |A|\ \text{even}}}\binom{n}{|A|}z^{\,n-|A|+1}
          +\sum_{\substack{A\neq\emptyset\\ |A|\ \text{odd}}}\binom{n}{|A|}z^{\,n-|A|} \\[1mm]
        &=z^{n}+z\frac{(1+z)^{n}+(1-z)^{n}}{2}-z^{\,n+1}
          +\frac{(1+z)^{n}-(1-z)^{n}}{2} \\[1mm]
        &=z^{n}+\frac{(1+z)^{\,n+1}-(1-z)^{\,n+1}}{2}-z^{\,n+1}.
      \end{align*}
    \item If $n$ is odd,
      \begin{align*}
        P_{(K_n,\emptyset)}^{\langle\tau\delta\rangle}(z)
        &=z^{n-1}
          +\sum_{\substack{A\neq\emptyset\\ |A|\ \text{even}}}\binom{n}{|A|}z^{\,n-|A|+1}
          +\sum_{\substack{A\neq\emptyset\\ |A|\ \text{odd}}}\binom{n}{|A|}z^{\,n-|A|} \\[1mm]
        &=z^{n-1}+z\frac{(1+z)^{n}+(1-z)^{n}}{2}
          +\frac{(1+z)^{n}-(1-z)^{n}}{2}-z^{\,n+1} \\[1mm]
        &=z^{n-1}+\frac{(1+z)^{\,n+1}+(1-z)^{\,n+1}}{2}-z^{\,n+1}.
      \end{align*}
  \end{itemize}


\end{itemize}
\end{example}

\section{Basic properties of partial-twuality polynomials}

\subsection{Product formulas and reduction for isolated vertices}

\begin{proposition}\label{prop:product-disjoint-union}
Let $\bullet \in \{\delta, \tau, \delta\tau, \tau\delta, \tau\delta\tau\}$, and let $M$ be a square matrix over a field with rows and columns indexed by a finite set $V$.  If $V$ is the disjoint union of two subsets $V_1$ and $V_2$ such that $M$ is block-diagonal with respect to this partition, i.e., $M = M[V_1] \oplus M[V_2]$, then
\[
P_{\langle \bullet \rangle}(M, z) = P_{\langle \bullet \rangle}\bigl(M[V_1], z\bigr) P_{\langle \bullet \rangle}\bigl(M[V_2], z\bigr).
\]
\end{proposition}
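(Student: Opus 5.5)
The plan is to show that each exponent function splits additively along the block decomposition. Concretely, I will prove that for every $A\subseteq V$, writing $A_1=A\cap V_1$ and $A_2=A\cap V_2$,
\[
r_{\langle\bullet\rangle}(M,A)=r_{\langle\bullet\rangle}(M[V_1],A_1)+r_{\langle\bullet\rangle}(M[V_2],A_2).
\]
The product formula then follows at once. The map $A\mapsto(A_1,A_2)$ is a bijection from subsets of $V$ to pairs of subsets of $V_1$ and $V_2$. Hence
\[
\sum_{A\subseteq V} z^{r(M,A)}=\sum_{A_1\subseteq V_1}\sum_{A_2\subseteq V_2} z^{r(M[V_1],A_1)}z^{r(M[V_2],A_2)},
\]
and this double sum factors as the product of the two polynomials. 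If one of the $V_i$ is empty, the convention $P_{\langle\bullet\rangle}=1$ for the empty matrix makes the statement trivial, consistent with the convention $\operatorname{rank}=\operatorname{corank}=0$ for the empty matrix.

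The key elementary facts are the following, where the complement of $A_i$ is taken inside $V_i$, so that $A^c\cap V_i=V_i\setminus A_i$.
\begin{enumerate}
\item[(a)] Adding a diagonal matrix preserves block-diagonality: $M+I_A=(M[V_1]+I_{A_1})\oplus(M[V_2]+I_{A_2})$.
\item[(b)] For any $X\subseteq V$, the principal submatrix $N[X]$ of a block-diagonal $N$ is again block diagonal: $N[X]=N[X\cap V_1]\oplus N[X\cap V_2]$.
\item[(c)] Rank is additive over direct sums. Since corank equals size minus rank and sizes add, corank is additive as well.
\end{enumerate}

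With these facts, each of the five cases is a one-line check.
\begin{itemize}
\item For $\delta$, apply (b) with $X=A$ and $X=A^c$, then (c).
\item For $\tau$, apply (a), then (c).
\item For $\delta\tau$, apply (a) and (b) to $(M+I_A)[A]$, and apply (b) to $M[A^c]$.
\item For $\tau\delta$, the term $\operatorname{rank}(M+I_A)$ splits by (a) and (c), and $\operatorname{corank}(M[A])$ splits by (b) and (c).
\item For $\tau\delta\tau$, $\operatorname{rank}(M)$ splits by (c), and $\operatorname{corank}((M+I_A)[A])$ splits by (a), (b) and (c).
\end{itemize}
In every case the exponent is a signed sum of additive quantities, so it is additive.

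There is no genuine obstacle here. The only points needing care are keeping complements relative to $V_i$ rather than $V$, and confirming that the signed (corank) terms in the $\tau\delta$ and $\tau\delta\tau$ cases remain additive. Both are handled by (b) and (c).
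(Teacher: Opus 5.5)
Your proposal is correct and follows essentially the same argument as the paper: every matrix appearing in the exponent is block-diagonal, rank and corank are additive over direct sums, so $r_{\langle\bullet\rangle}(M,A)=r_{\langle\bullet\rangle}(M[V_1],A\cap V_1)+r_{\langle\bullet\rangle}(M[V_2],A\cap V_2)$, and the sum over $A$ factors. Your case-by-case check of the five exponents is a slightly more explicit version of the paper's single sentence covering all cases.
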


\begin{proof}
For any subset $A \subseteq V$, denote $A_1 = A \cap V_1$ and $A_2 = A \cap V_2$.
Since $M$ is block-diagonal with respect to the partition $V = V_1 \sqcup V_2$, every matrix that appears in Definition~\ref{maindef} (for instance, $M[A]$, $M+I_A$, $(M+I_A)[A]$ and $M[A^c]$) is itself block-diagonal with blocks determined only by $M[V_1]$, $M[V_2]$, $A_1$, and $A_2$.
Consequently, the rank (resp. corank) of each such matrix is the sum of the ranks (resp. coranks) of its two blocks.
Hence, for each exponent function $r_{\langle\bullet\rangle}$ in Definition~\ref{maindef} we have
\[
r_{\langle\bullet\rangle}(M, A) = r_{\langle\bullet\rangle}\bigl(M[V_1], A_1\bigr) + r_{\langle\bullet\rangle}\bigl(M[V_2], A_2\bigr).
\]
Therefore,
\[
\begin{aligned}
P_{\langle\bullet\rangle}(M,z)
&= \sum_{A\subseteq V} z^{r_{\langle\bullet\rangle}(M,A)} \\
&= \sum_{A_1\subseteq V_1} \sum_{A_2\subseteq V_2} z^{r_{\langle\bullet\rangle}(M[V_1],A_1) + r_{\langle\bullet\rangle}(M[V_2],A_2)} \\
&= \Bigl(\sum_{A_1\subseteq V_1} z^{r_{\langle\bullet\rangle}(M[V_1],A_1)}\Bigr) \Bigl(\sum_{A_2\subseteq V_2} z^{r_{\langle\bullet\rangle}(M[V_2],A_2)}\Bigr) \\
&= P_{\langle\bullet\rangle}\bigl(M[V_1], z\bigr) P_{\langle\bullet\rangle}\bigl(M[V_2], z\bigr).
\end{aligned}
\]
\end{proof}

The multiplicative formula of Proposition~\ref{prop:product-disjoint-union} applies directly to grafts.

\begin{corollary}
Let $\bullet \in \{\delta, \tau, \delta\tau, \tau\delta, \tau\delta\tau\}$.
If a graft $(G, L_G)$ is the disjoint union of two grafts $(G_1, L_{G_1})$ and $(G_2, L_{G_2})$, then
\[
P_{(G, L_G)}^{\langle \bullet \rangle}(z) = P_{(G_1, L_{G_1})}^{\langle \bullet \rangle}(z)  P_{(G_2, L_{G_2})}^{\langle \bullet \rangle}(z).
\]
\end{corollary}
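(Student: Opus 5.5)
The corollary follows from Proposition~\ref{prop:product-disjoint-union} once we show that the adjacency matrix of a disjoint union of grafts is block-diagonal. So the plan has one real step: compare $\mathbf A_{(G,L_G)}$ with the direct sum of $\mathbf A_{(G_1,L_{G_1})}$ and $\mathbf A_{(G_2,L_{G_2})}$.

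Set $V_1=V(G_1)$ and $V_2=V(G_2)$, so that $V(G)=V_1\sqcup V_2$. Saying that $(G,L_G)$ is the disjoint union of the two grafts means three things:
\begin{itemize}
\item[(a)] $G$ is the disjoint union of $G_1$ and $G_2$, with no edges between $V_1$ and $V_2$;
\item[(b)] $L_G=L_{G_1}\sqcup L_{G_2}$;
\item[(c)] the adjacencies inside each $V_i$ are those of $G_i$.
\end{itemize}

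We check the entries of $M=\mathbf A_{(G,L_G)}$ directly from the definition.
\begin{itemize}
\item For $u\in V_1$ and $v\in V_2$, we have $u\neq v$ and $u,v$ are non-adjacent by (a), so $M_{uv}=0$.
\item For $u,v\in V_i$ with $u\neq v$, the entry is $1$ exactly when $u,v$ are adjacent in $G_i$, by (c).
\item For a diagonal entry $u\in V_i$, the entry is $1$ exactly when $u\in L_{G_i}$, by (b).
\end{itemize}
Hence $M[V_i]=\mathbf A_{(G_i,L_{G_i})}$ for $i=1,2$, and
\[
M = M[V_1]\oplus M[V_2].
\]

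Proposition~\ref{prop:product-disjoint-union} now gives $P_{\langle\bullet\rangle}(M,z)=P_{\langle\bullet\rangle}(M[V_1],z)\,P_{\langle\bullet\rangle}(M[V_2],z)$. Rewriting both sides with Definition~\ref{graph-polynomial-def} yields the claimed product formula for every $\bullet \in \{\delta, \tau, \delta\tau, \tau\delta, \tau\delta\tau\}$.

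The proof has no genuine obstacle. The only care needed is in the block-diagonal check: the diagonal entries come from the loop set $L_G$, so one must confirm that $L_G$ splits as $L_{G_1}\sqcup L_{G_2}$. The degenerate case of an empty $G_i$ is also fine, because the empty matrix has polynomial $1$.
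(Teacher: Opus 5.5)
Your proposal is correct and takes the same approach as the paper. The paper states only that Proposition~\ref{prop:product-disjoint-union} applies directly, because $\mathbf A_{(G,L_G)}=\mathbf A_{(G_1,L_{G_1})}\oplus\mathbf A_{(G_2,L_{G_2})}$; your entry-by-entry check, including the splitting $L_G=L_{G_1}\sqcup L_{G_2}$ for the diagonal entries, just makes this block-diagonal observation explicit.
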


We now focus on the behaviour of the partial-$\langle\bullet\rangle$ polynomials in the presence of an isolated vertex.
Let $M$ be a square matrix over a field with rows and columns indexed by a finite set $V$. A vertex $v \in V$ is called \emph{isolated} if $M_{uv}=M_{vu}=0$ for all $u \in V \setminus \{v\}$.
The following proposition, stated over $\mathrm{GF}(2)$, shows that an isolated vertex contributes a multiplicative factor depending only on its diagonal entry $M_{vv}$, thereby allowing a reduction.

\begin{proposition}\label{prop:isolated-vertex-matrix}
Let $M$ be a square matrix over $\mathrm{GF}(2)$, with rows and columns indexed by a finite set $V$, and let $v \in V$ be an isolated vertex.
Let $M'=M[V\setminus\{v\}]$.
Then for $\bullet\in\{\delta,\tau,\delta\tau,\tau\delta,\tau\delta\tau\}$ the partial-$\langle\bullet\rangle$ polynomial  of $M$ satisfies
\[
P_{\langle\bullet\rangle}(M,z)=c_{\bullet}(M_{vv})\,P_{\langle\bullet\rangle}(M',z),
\]
where the factor $c_{\bullet}(M_{vv})$ is given in the following table:
\[
\begin{array}{c|c|c}
\bullet & M_{vv}=0 & M_{vv}=1\\ \hline
\delta & 2 & 2z\\
\tau & z+1 & z+1\\
\delta\tau & z+1 & z+1\\
\tau\delta & 2 & z+1\\
\tau\delta\tau & 2 & z+1
\end{array}
\]
\end{proposition}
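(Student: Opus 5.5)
Since $v$ is isolated, $M=M[\{v\}]\oplus M'$ is block-diagonal for the partition $V=\{v\}\sqcup(V\setminus\{v\})$. Proposition~\ref{prop:product-disjoint-union} then gives
\[
P_{\langle\bullet\rangle}(M,z)=P_{\langle\bullet\rangle}\bigl(M[\{v\}],z\bigr)\,P_{\langle\bullet\rangle}(M',z).
\]
So the whole statement reduces to computing $c_\bullet(a):=P_{\langle\bullet\rangle}((a),z)$ for the $1\times 1$ matrix $(a)$ over $\mathrm{GF}(2)$, with $a\in\{0,1\}$. This is a sum of two terms, one for $A=\emptyset$ and one for $A=\{v\}$.

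To evaluate these terms I will use the conventions that the $0\times 0$ matrix has rank $0$ and corank $0$, and that $I_{\{v\}}=(1)$. Over $\mathrm{GF}(2)$ this means $(a)+I_{\{v\}}=(a+1)$, which is where the field hypothesis enters. I then go through Definition~\ref{maindef} operator by operator.
\begin{enumerate}
\item[$\delta$:] Both $A=\emptyset$ and $A=\{v\}$ give exponent $\operatorname{rank}(a)$. This yields $2$ if $a=0$ and $2z$ if $a=1$.
\item[$\tau$:] The two exponents are $\operatorname{rank}(a)$ and $\operatorname{rank}(a+1)$, which are $0$ and $1$ in some order. This yields $1+z$ in both cases.
\item[$\delta\tau$:] For $A=\emptyset$ the exponent is $\operatorname{rank}(a)$. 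For $A=\{v\}$ it is $\operatorname{rank}(a+1)$. This again yields $1+z$.
\item[$\tau\delta$:] For $A=\emptyset$ the exponent is $\operatorname{rank}(a)$. For $A=\{v\}$ it is $\operatorname{rank}(a+1)-\operatorname{corank}(a)$. If $a=0$ this is $1-1=0$, giving $c=2$. If $a=1$ this is $0-0=0$ while the first term is $z$, giving $c=1+z$.
\item[$\tau\delta\tau$:] For $A=\emptyset$ the exponent is $\operatorname{rank}(a)$. For $A=\{v\}$ it is $\operatorname{rank}(a)-\operatorname{corank}(a+1)$. If $a=0$ this is $0-0=0$, giving $c=2$. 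If $a=1$ it is $1-1=0$, giving $z+1$.
\end{enumerate}
These values match the table.

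The only delicate point is bookkeeping for the empty principal submatrices and the corank terms. In particular, the value $0$ for the corank of a $0\times0$ matrix must be used consistently with Definition~\ref{maindef} and the stated convention $P_{\langle\bullet\rangle}(\emptyset,z)=1$. There is no real obstacle beyond this. If one prefers to avoid invoking Proposition~\ref{prop:product-disjoint-union}, the same result follows directly by splitting each $A$ as $A'$ or $A'\cup\{v\}$ and using additivity of rank and corank over direct sums.
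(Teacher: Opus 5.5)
Your proposal is correct and follows the paper's proof essentially verbatim: you split $M=[M_{vv}]\oplus M'$, invoke Proposition~\ref{prop:product-disjoint-union}, and evaluate the $A=\emptyset$ and $A=\{v\}$ terms for the $1\times1$ matrix. Your exponents match the paper's table exactly, and so do the resulting factors $c_\bullet(M_{vv})$. The hypothesis $\mathrm{GF}(2)$ is used only through $1+1=0$ when $M_{vv}=1$, and your case computations handle this correctly.
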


\begin{proof}
Denote $V' = V \setminus \{v\}$. Since $M_{uv}=M_{vu}=0$ for all $u \in V \setminus \{v\}$, the matrix $M$ is block-diagonal with respect to the partition $\{v\} \cup V'$, i.e., $M = [M_{vv}] \oplus M'$.  By Proposition~\ref{prop:product-disjoint-union} we have
\[
P_{\langle\bullet\rangle}(M,z)=P_{\langle\bullet\rangle}([M_{vv}],z) P_{\langle\bullet\rangle}(M',z).
\]
Thus it suffices to compute the five polynomials for the $1\times1$ matrix $[M_{vv}]$.
For a set $A\subseteq\{v\}$ there are two possibilities: $A=\varnothing$ and $A=\{v\}$.

A direct calculation using Definition~\ref{maindef} yields the following values of the exponent $r_{\langle\bullet\rangle}([M_{vv}],A)$:

\[
\begin{array}{c|c|c}
\bullet & A=\varnothing & A=\{v\}\\ \hline
\delta & M_{vv} & M_{vv}\\
\tau & M_{vv} & 1-M_{vv}\\
\delta\tau & M_{vv} & 1-M_{vv}\\
\tau\delta & M_{vv} & 0\\
\tau\delta\tau & M_{vv} & 0
\end{array}
\]
Hence
\[
\begin{aligned}
P_{\langle\delta\rangle}([M_{vv}],z)&=z^{M_{vv}}+z^{M_{vv}}=2z^{M_{vv}},\\
P_{\langle\tau\rangle}([M_{vv}],z)&=z^{M_{vv}}+z^{1-M_{vv}}=z+1,\\
P_{\langle\delta\tau\rangle}([M_{vv}],z)&=z^{M_{vv}}+z^{1-M_{vv}}=z+1,\\
P_{\langle\tau\delta\rangle}([M_{vv}],z)&=z^{M_{vv}}+z^{0}=1+z^{M_{vv}},\\
P_{\langle\tau\delta\tau\rangle}([M_{vv}],z)&=z^{M_{vv}}+z^{0}=1+z^{M_{vv}}.
\end{aligned}
\]
Substituting $M_{vv}=0$ and $M_{vv}=1$ gives the factors listed in the table.
\end{proof}

\subsection {Leaf-reduction formulas for grafts}

\begin{proposition}[\cite{Cheng}]\label{prop:delta-simple}
Let $G$ be a simple graph and $x, y\in V(G)$ be two adjacent vertices.
If $x$ is a leaf, i.e., the degree of $x$ is one, then
\[
P_{(G,\emptyset)}^{\langle \delta \rangle}(z)
= P_{(G \setminus \{x\},\emptyset)}^{\langle \delta \rangle}(z) + 2z^{2}P_{(G \setminus \{x,y\},\emptyset)}^{\langle \delta \rangle}(z).
\]
\end{proposition}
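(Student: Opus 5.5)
Throughout, write $M=\mathbf A_{(G,\emptyset)}$ over $\mathrm{GF}(2)$. The plan is to split the sum defining $P_{(G,\emptyset)}^{\langle\delta\rangle}(z)$ according to how $x$ and $y$ are distributed between $A$ and $A^c$. Since $r_{\langle\delta\rangle}(M,A)=r_{\langle\delta\rangle}(M,A^c)$, the key tool is a rank reduction for a leaf, applied to whichever of the principal submatrices $M[A]$, $M[A^c]$ contains $x$.

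The basic lemma is the following. Let $N$ be the adjacency matrix, with zero diagonal, of a simple graph $H$ in which $x$ is a leaf with neighbour $y$. Then $\operatorname{rank}(N)=\operatorname{rank}(N[V(H)\setminus\{x,y\}])+2$. To prove it, use row $x$, which is $e_y^T$, to clear every other entry of column $y$ by row operations. Then use column $x$, which is $e_y$, to clear row $y$ by column operations. The entries outside rows and columns $\{x,y\}$ are untouched, so the result is $\begin{pmatrix}0&1\\1&0\end{pmatrix}\oplus N[V\setminus\{x,y\}]$. If $x$ is isolated in $H$ (because $y$ lies on the other side), then $\operatorname{rank}(N)=\operatorname{rank}(N[V(H)\setminus\{x\}])$, since the diagonal entry $M_{xx}$ is $0$.

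Let $W=V(G)\setminus\{x\}$ and $U=V(G)\setminus\{x,y\}$. For each subset $A'\subseteq W$ there are two subsets $A\subseteq V(G)$ with $A\cap W=A'$, namely $A'$ and $A'\cup\{x\}$.

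\emph{Case 1: $x$ and $y$ lie on different sides.} Here $x$ is isolated in its part, so $r_{\langle\delta\rangle}(M,A)=r_{\langle\delta\rangle}(M[W],A')$.

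\emph{Case 2: $x$ and $y$ lie on the same side.} Here the lemma gives $r_{\langle\delta\rangle}(M,A)=r_{\langle\delta\rangle}(M[U],A\cap U)+2$.

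The sum should therefore be reorganized by pairing up complementary assignments of $x$ relative to $y$.

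For each $A'\subseteq W$, exactly one choice of whether $x\in A$ puts $x$ opposite $y$. Summing these terms over $A'$ gives exactly $P^{\langle\delta\rangle}_{(G\setminus\{x\},\emptyset)}(z)$.

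The remaining terms are those with $x$ and $y$ on the same side. They are indexed by $A''=A\cap U\subseteq U$ together with the choice of the side holding $\{x,y\}$, which gives two options. Their total is $2z^2P^{\langle\delta\rangle}_{(G\setminus\{x,y\},\emptyset)}(z)$. This yields the formula.

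The only real content is the rank lemma for a leaf, and in particular checking that the elimination does not disturb $N[U]$. This works because row $x$ has its single nonzero entry in column $y$, and column $x$ has its single nonzero entry in row $y$. The remaining concern is the bookkeeping, namely making sure the two cases partition all $2^{|V|}$ subsets exactly once.
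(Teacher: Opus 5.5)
Your argument is correct. The two opposite-side placements of $x,y$ biject with subsets of $V(G)\setminus\{x\}$ via $A\mapsto A\setminus\{x\}$. The two same-side placements each contribute $z^{2}P^{\langle\delta\rangle}_{(G\setminus\{x,y\},\emptyset)}(z)$ by your leaf elimination.

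The paper gives no proof of its own here (it cites Cheng). Your route is the same technique it uses for its leaf-reduction of the partial-$\langle\tau\rangle$ polynomial: partition by the membership of $x,y$, then clear the leaf pair by row and column operations.
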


Li \cite{Li2025} extended Proposition~\ref{prop:delta-simple} to grafts as follows.

\begin{proposition}[\cite{Li2025}]
Let $(G,L_G)$ be a graft and let $x,y\in V(G)$ be two adjacent vertices such that  $x$ is a leaf and $x\notin L_G$. Then
\[
P_{(G,L_G)}^{\langle\delta\rangle}(z)
   = P_{(G \setminus \{x\},\;L_G)}^{\langle\delta\rangle}(z)
   \;+\;2z^{2}\,P_{(G \setminus \{x,y\},\;L_G\setminus\{y\})}^{\langle\delta\rangle}(z).
\]
\end{proposition}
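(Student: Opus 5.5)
We split the sum over $A\subseteq V(G)$ according to where $x$ and $y$ lie, and compute ranks over $\mathrm{GF}(2)$ by elimination using the leaf $x$. Write $M=\mathbf A_{(G,L_G)}$. Since $x$ is a leaf adjacent only to $y$ and $x\notin L_G$, the row and column of $x$ in $M$ have a single nonzero entry, namely $M_{xy}=M_{yx}=1$, and $M_{xx}=0$.

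The basic tool is the following rank identity. Let $N$ be a principal submatrix of $M$ on a set $W$ containing both $x$ and $y$. Then $x$ is a leaf of the induced subgraph with zero diagonal entry. Using row $x$ and column $x$ we can clear the rest of row $y$ and column $y$; the resulting block is the $2\times2$ block $\begin{pmatrix}0&1\\1&\ast\end{pmatrix}$, which is nonsingular whatever the diagonal entry at $y$. Hence
\[
\operatorname{rank}(N)=2+\operatorname{rank}\bigl(M[W\setminus\{x,y\}]\bigr).
\]
In particular, the diagonal entry of $y$ is irrelevant. This is why the right-hand side can use $L_G\setminus\{y\}$ (equivalently, any label at $y$).

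If $x\in W$ but $y\notin W$, then $x$ is an isolated zero row and column, so
\[
\operatorname{rank}(N)=\operatorname{rank}\bigl(M[W\setminus\{x\}]\bigr).
\]

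Now fix $A$ and consider the four cases for $x$ and $y$.

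\begin{enumerate}
\item[(a)] $x\in A$ and $y\notin A$. Then $M[A]$ has $x$ isolated with zero diagonal, and $y\in A^c$ but $x\notin A^c$. So
\[
r_{\langle\delta\rangle}(M,A)=\operatorname{rank}\bigl(M[A\setminus x]\bigr)+\operatorname{rank}\bigl(M[A^c]\bigr),
\]
which is exactly the exponent for the subset $A\setminus\{x\}$ of $V(G)\setminus\{x\}$ in $(G\setminus x,L_G)$. Here that subset contains no $y$.

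\item[(b)] $x\notin A$ and $y\in A$. Symmetrically, this gives the term for the subset $A$ of $V\setminus\{x\}$, which does contain $y$.

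\item[(c)] $x,y\in A$. The identity applied to $M[A]$ gives exponent
\[
2+\operatorname{rank}\bigl(M[A\setminus\{x,y\}]\bigr)+\operatorname{rank}\bigl(M[A^c]\bigr),
\]
where $A^c$ avoids $x$ and $y$.

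\item[(d)] $x,y\notin A$. The identity applied to $M[A^c]$ gives the same form with the roles of $A$ and $A^c$ interchanged.
\end{enumerate}

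Cases (a) and (b) together run bijectively over all subsets of $V\setminus\{x\}$ (split by whether $y$ is included). They therefore sum to $P^{\langle\delta\rangle}_{(G\setminus x,L_G)}(z)$.

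Cases (c) and (d) each run bijectively over all subsets $A'\subseteq V\setminus\{x,y\}$, with exponent $2+r_{\langle\delta\rangle}\bigl(M[V\setminus\{x,y\}],A'\bigr)$. Together they contribute
\[
2z^2P^{\langle\delta\rangle}_{(G\setminus\{x,y\},L_G\setminus\{y\})}(z),
\]
since the label at $y$ is gone once $y$ is deleted.

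The only point needing care is the elimination step: we must check that clearing row and column $y$ via $x$ is legitimate over $\mathrm{GF}(2)$ even when $y\in L_G$. Adding row $x$ to the other rows that have a $1$ in column $y$ (and similarly for columns) uses only the single $1$ of row $x$ at position $y$. So the remaining submatrix is unchanged, and the $2\times2$ block has determinant $1$ regardless of the diagonal entry at $y$. The hypothesis $x\notin L_G$ is essential here: with $M_{xx}=1$, case (a) would no longer reduce cleanly.
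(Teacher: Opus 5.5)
Your proof is correct. The four-way split on whether $x$ and $y$ lie in $A$ works, and so do the two rank facts: $x$ is an isolated zero row and column when $y$ is on the other side, and $\operatorname{rank}(N)=2+\operatorname{rank}(M[W\setminus\{x,y\}])$ by elimination through the leaf when both lie in $W$. The bijections in cases (a)--(d) also check out.

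The paper itself gives no proof of this statement; it is quoted from Li's work. Your argument is the same case decomposition with block elimination that the paper uses for its own $\langle\tau\rangle$ leaf-reduction, so there is nothing to add.
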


We obtain the following leaf-reduction for the partial-$\langle\tau\rangle$ polynomial.

\begin{proposition}
Let $(G, L_G)$ be a graft and let $x, y \in V(G)$ be adjacent vertices. If  $x$ is a leaf, then
\[
P_{(G, L_G)}^{\langle \tau \rangle}(z)
= z \, P_{(G \setminus \{x\},\;L_G\setminus \{x\})}^{\langle \tau \rangle}(z) + 2z^{2} \, P_{(G \setminus \{x,y\},\;L_G\setminus \{x,y\})}^{\langle \tau \rangle}(z).
\]
\end{proposition}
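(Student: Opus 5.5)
The plan is to work with the symmetric-difference-free form of the polynomial. By Remark~\ref{remark1}(1), $P^{\langle\tau\rangle}_{(G,L_G)}(z)$ depends only on the underlying graph $G$, so
\[
P^{\langle\tau\rangle}_{(G,L_G)}(z)=\sum_{B\subseteq V(G)} z^{\operatorname{rank}(\mathbf A_{(G,B)})},
\]
and the same holds for the two polynomials on the right-hand side. In particular the loop sets $L_G\setminus\{x\}$ and $L_G\setminus\{x,y\}$ play no role. I will split the sum according to whether the leaf $x$ lies in $B$, and in each case compute $\operatorname{rank}(\mathbf A_{(G,B)})$ over $\mathrm{GF}(2)$ by symmetric row and column elimination using the row of $x$. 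That row has very few nonzero entries: a $1$ in column $y$, and a $1$ in column $x$ exactly when $x\in B$.

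\textbf{Case $x\in B$.} The entry $(x,x)$ equals $1$, so I pivot on it. Taking the Schur complement, $\operatorname{rank}(\mathbf A_{(G,B)})=1+\operatorname{rank}(N)$, where $N$ is the principal submatrix on $V\setminus\{x\}$ minus the outer product of column $x$ with row $x$ (restricted to $V\setminus\{x\}$). Both of these vectors are supported only on $y$, so over $\mathrm{GF}(2)$ the correction just toggles the diagonal entry at $y$. Hence
\[
N=\mathbf A_{(G\setminus\{x\},\,(B\setminus\{x\})\Delta\{y\})}.
\]
As $B$ runs over the subsets containing $x$, the set $(B\setminus\{x\})\Delta\{y\}$ runs bijectively over all subsets of $V\setminus\{x\}$. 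This case therefore contributes $z\,P^{\langle\tau\rangle}_{(G\setminus\{x\},\cdot)}(z)$.

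\textbf{Case $x\notin B$.} Now row $x$ and column $x$ each have a single $1$, in position $y$. I use the entry $(x,y)$ to clear the rest of column $y$, and the entry $(y,x)$ to clear the rest of row $y$. These operations also wipe out the diagonal entry $(y,y)$, whatever it was. The matrix then splits as a $2\times2$ block $\begin{pmatrix}0&1\\1&0\end{pmatrix}$ on $\{x,y\}$, of rank $2$, plus $\mathbf A_{(G\setminus\{x,y\},\,B\setminus\{x,y\})}$. So
\[
\operatorname{rank}(\mathbf A_{(G,B)})=2+\operatorname{rank}\bigl(\mathbf A_{(G\setminus\{x,y\},\,B\setminus\{y\})}\bigr),
\]
independently of whether $y\in B$. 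The free choice of $y$ gives a factor $2$, and summing over $B\setminus\{x,y\}$ yields $2z^2P^{\langle\tau\rangle}_{(G\setminus\{x,y\},\cdot)}(z)$.

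Adding the two cases gives the stated recursion. The only step needing care is the Schur-complement computation in the first case. The key point there is that the correction term is confined to the $(y,y)$ entry, and that toggling $y$ is absorbed by reindexing the sum via Remark~\ref{remark1}(1). I expect this step to go through directly.
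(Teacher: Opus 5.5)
Your proof is correct and follows essentially the paper's argument: reduce to $(G,\emptyset)$ via Remark~\ref{remark1}(1), then eliminate using the row and column of the leaf $x$, obtaining rank $2+\operatorname{rank}$ of the graft on $G\setminus\{x,y\}$ when $x\notin B$, and rank $1+\operatorname{rank}$ of a graft on $G\setminus\{x\}$ when $x\in B$. The paper splits into four cases by membership of $x$ and $y$ and then builds a bijection $\varphi$ on the sets containing $x$; that $\varphi$ is exactly your map $B\mapsto (B\setminus\{x\})\Delta\{y\}$, which your Schur-complement computation produces in one step.
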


\begin{proof}
Remark~\ref{remark1} shows that the partial-$\langle\tau\rangle$ polynomial is independent of $L_G$. Consequently, it suffices to establish the formula for $(G, \emptyset)$.
We divide all the subsets of the vertex set of $G$ into four parts $V_1, V_2, V_3, V_4$, where
\begin{itemize}
  \item[~] $V_1=\{A\subseteq V(G)\mid x\notin A,\ y\notin A\}$,
  \item[~] $V_2=\{A\subseteq V(G)\mid x\notin A,\ y\in A\}$,
  \item[~] $V_3=\{A\subseteq V(G)\mid x\in A,\ y\in A\}$,
  \item[~] $V_4=\{A\subseteq V(G)\mid x\in A,\ y\notin A\}$.

\end{itemize}
According to Proposition \ref{pro05}, we have

\begin{itemize}
\item For $A\in V_1$,
  \begin{align*}
    \operatorname{rank}(\mathbf{A}_{(G, A)})
    &=\operatorname{rank}\begin{pmatrix}
        0 & 1 & \mathbf{0} \\
        1 & 0 & \ast \\
        \mathbf{0} & \ast & \ast
      \end{pmatrix} \\
    &=\operatorname{rank}\begin{pmatrix}
        0 & 1 & \mathbf{0} \\
        1 & 0 & \mathbf{0} \\
        \mathbf{0} & \mathbf{0} & \ast
      \end{pmatrix}
    =\operatorname{rank}(\mathbf{A}_{(G\setminus \{x,y\}, A)})+2.
  \end{align*}
  Hence,
  \[
    \sum_{A\in V_1}z^{\operatorname{rank}(\mathbf{A}_{(G, A)})}=z^2P_{(G\setminus \{x,y\},\emptyset)}^{\langle \tau \rangle}(z).
  \]

\item For $A\in V_2$,
  \begin{align*}
    \operatorname{rank}(\mathbf{A}_{(G, A)})
    &=\operatorname{rank}\begin{pmatrix}
        0 & 1 & \mathbf{0} \\
        1 & 1 & \ast \\
        \mathbf{0} & \ast & \ast
      \end{pmatrix} \\
    &=\operatorname{rank}\begin{pmatrix}
        0 & 1 & \mathbf{0} \\
        1 & 0 & \mathbf{0} \\
        \mathbf{0} & \mathbf{0} & \ast
      \end{pmatrix}
    =\operatorname{rank}(\mathbf{A}_{(G\setminus \{x,y\}, A\setminus\{y\})})+2.
  \end{align*}
  Hence,
  \[
    \sum_{A\in V_2}z^{\operatorname{rank}(\mathbf{A}_{(G, A)})}=z^2P_{(G\setminus \{x,y\},\emptyset)}^{\langle \tau \rangle}(z).
  \]

\item For $A\in V_3$,
  \begin{align*}
    \operatorname{rank}(\mathbf{A}_{(G, A)})
    &=\operatorname{rank}\begin{pmatrix}
        1 & 1 & \mathbf{0} \\
        1 & 1 & \ast \\
        \mathbf{0} & \ast & \ast
      \end{pmatrix} \\
    &=\operatorname{rank}\begin{pmatrix}
        1 & 0 & \mathbf{0} \\
        0 & 0 & \ast \\
        \mathbf{0} & \ast & \ast
      \end{pmatrix}
    =\operatorname{rank}(\mathbf{A}_{(G\setminus\{x\}, A\setminus\{x,y\})})+1.
  \end{align*}

  \item For $A\in V_4$,
  \begin{align*}
    \operatorname{rank}(\mathbf{A}_{(G, A)})
    &=\operatorname{rank}\begin{pmatrix}
        1 & 1 & \mathbf{0} \\
        1 & 0 & \ast \\
        \mathbf{0} & \ast & \ast
      \end{pmatrix} \\
    &=\operatorname{rank}\begin{pmatrix}
        1 & 0 & \mathbf{0} \\
        0 & 1 & \ast \\
        \mathbf{0} & \ast & \ast
      \end{pmatrix}
    =\operatorname{rank}(\mathbf{A}_{(G\setminus\{x\}, (A\setminus\{x\})\cup\{y\})})+1.
  \end{align*}
\end{itemize}

Now note that the sets $V_3$ and $V_4$ together consist of all subsets of $V(G)$ that contain $x$. Moreover, the map
\[
\varphi: V_3 \cup V_4 \longrightarrow \mathcal{P}(V(G\setminus\{x\})), \qquad
\varphi(A)=\begin{cases}
A\setminus\{x,y\}, & \text{if } A \in V_3,\\
(A\setminus\{x\})\cup\{y\}, & \text{if } A \in V_4,
\end{cases}
\]
is a bijection. For $A \in V_3 \cup V_4$, we have
\[
\operatorname{rank}(\mathbf{A}_{(G, A)}) = \operatorname{rank}(\mathbf{A}_{(G\setminus\{x\}, \varphi(A))}) + 1.
\]
Therefore,
\begin{align*}
\sum_{A\in V_3\cup V_4} z^{\operatorname{rank}(\mathbf{A}_{(G, A)})}
&= z \sum_{\varphi(A)\subseteq V(G\setminus\{x\})} z^{\operatorname{rank}(\mathbf{A}_{(G\setminus\{x\}, \varphi(A))})} \\
&= z P_{(G\setminus\{x\},\emptyset)}^{\langle \tau \rangle}(z).
\end{align*}

Combining all four parts, we obtain
\begin{align*}
P_{(G,\emptyset)}^{\langle \tau \rangle}(z)
&= \sum_{i=1}^{4}\sum_{A\in V_i}z^{\operatorname{rank}(\mathbf{A}_{(G, A)})} \\
&= z P_{(G\setminus\{x\},\emptyset)}^{\langle \tau \rangle}(z) + (2z^{2}) P_{(G\setminus\{x, y\},\emptyset)}^{\langle \tau \rangle}(z).
\end{align*}
\end{proof}

\subsection{Bounds on the degree of partial-$\langle\bullet\rangle$ polynomials}

For a nonzero polynomial \(P(z) = \sum_i c_i z^i\), we denote by \(\min\deg P(z)\) the smallest exponent \(i\) with \(c_i \neq 0\), and by \(\deg P(z)\) the largest such index. The following proposition shows that  $P_{\langle \bullet \rangle}(M,z)$ is a polynomial.

\begin{proposition}\label{mindegree}
    Let \(M\) be a square matrix over a field, with rows and columns indexed by a finite set \(V\). Then the partial-$\langle\bullet\rangle$ polynomials satisfy
    \[
    \min\deg P_{\langle \bullet \rangle}(M,z) \geq 0 \qquad\text{for } \bullet \in \{ \delta, \tau, \delta\tau, \tau\delta, \tau\delta\tau\}.
    \]
\end{proposition}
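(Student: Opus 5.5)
The claim reduces to showing that each exponent $r_{\langle\bullet\rangle}(M,A)$ in Definition~\ref{maindef} is a nonnegative integer for every $A\subseteq V$. Then $P_{\langle\bullet\rangle}(M,z)$ is a finite sum of monomials $z^k$ with $k\ge 0$ and positive integer coefficients. It is therefore a genuine polynomial, nonzero because the term for $A=\emptyset$ is present, and its minimum degree is at least $0$. Integrality is immediate, since ranks and coranks are integers. So the work lies in proving nonnegativity, exponent by exponent.

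For $\bullet\in\{\delta,\tau,\delta\tau\}$ nothing needs to be done. Each exponent is a sum of ranks of matrices, namely $M[A]$, $M[A^c]$, $M+I_A$ and $(M+I_A)[A]$, and ranks are nonnegative.

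The two remaining cases involve subtracting a corank, and this is the only real step. Write $n=|V|$. For $\bullet=\tau\delta\tau$ we must show
\[
\operatorname{corank}\bigl((M+I_A)[A]\bigr)\le \operatorname{rank}(M).
\]
The key observation is that off the diagonal, $(M+I_A)[A]$ agrees with $M[A]$. I would bound the corank of a principal submatrix by the rank of $M$ via a complementary-submatrix argument. Consider the submatrix $M[A,A^c]$ together with $M[A^c]$.

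A cleaner approach is to return to the origin of the formula. Since $r_{\langle\tau\delta\tau\rangle}(M,A)=n-\operatorname{corank}(M)-\operatorname{corank}((M+I_A)[A])$, nonnegativity amounts to
\[
\operatorname{corank}(M)+\operatorname{corank}\bigl((M+I_A)[A]\bigr)\le n .
\]
Now $\operatorname{corank}((M+I_A)[A])\le |A|$. So it suffices to show $\operatorname{corank}(M)\le n-|A|$, but that can fail, for example when $M=0$ and $A=V$.

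I therefore expect a direct inequality to break down for a general field, and I would test small cases before committing. Take $M=0$ over any field with $A=V$. Then $(M+I_A)[A]=I$ has corank $0$, giving exponent $0$, which is fine. Next consider the case where $(M+I_A)[A]$ is singular. Over $\mathrm{GF}(2)$, take $M=\begin{pmatrix}1&1\\1&1\end{pmatrix}$ and $A=\{1\}$. Then $(M+I_A)[A]=[0]$ has corank $1$ and $\operatorname{rank}M=1$, giving exponent $0$. The plan is then to prove in general that a nonzero null vector $x$ of $(M+I_A)[A]$ forces a rank contribution from $M$. Since $M[A]x=-x$, each such $x$ yields a vector $Mx=-x+(\text{entries on }A^c)$ that is nonzero in $\operatorname{col}(M)$. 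The map $x\mapsto Mx$ restricted to $\ker((M+I_A)[A])$ is injective, because its $A$-component is $-x$. Hence
\[
\operatorname{rank}(M)\ge \dim\ker\bigl((M+I_A)[A]\bigr)=\operatorname{corank}\bigl((M+I_A)[A]\bigr),
\]
which is exactly what is needed for $\tau\delta\tau$.

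For $\tau\delta$ the same trick applies with the roles of the two matrices exchanged. Take $x\in\ker(M[A])$ and apply $M+I_A$ to it. The $A$-component is $M[A]x+x=x$, so $x\mapsto (M+I_A)x$ is injective on $\ker(M[A])$. This gives
\[
\operatorname{rank}(M+I_A)\ge\operatorname{corank}(M[A]).
\]
In both cases the injectivity argument is the crux; everything else is bookkeeping. Throughout, vectors indexed by $A$ are extended by zero to all of $V$.
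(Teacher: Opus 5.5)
Your argument is correct and is essentially the paper's proof. In both, a null vector of $M[A]$ is fixed by $(M+I_A)[A]$ and a null vector of $(M+I_A)[A]$ is negated by $M[A]$, so each kernel lies in a column space. The only cosmetic difference is that you use zero-extension to inject the kernel directly into $\operatorname{col}(M+I_A)$ (resp.\ $\operatorname{col}(M)$). The paper instead first places it in $\operatorname{col}((M+I_A)[A])$ (resp.\ $\operatorname{col}(M[A])$) and then uses that a principal submatrix has rank at most that of the full matrix. In a write-up, drop the exploratory detour (the abandoned bound $\operatorname{corank}(M)\le n-|A|$ and the worry that a direct inequality might fail over a general field), since your injectivity argument settles both cases over every field.
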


\begin{proof}
For \(\bullet \in \{\delta, \tau, \delta\tau\}\), this is immediate from the non-negativity of ranks in Definition~\ref{maindef}.
It remains to consider the two cases \(\bullet=\tau\delta\) and \(\bullet=\tau\delta\tau\).

\medskip\noindent
\textbf{Case \(\bullet=\tau\delta\):}
For any \(v\in\ker M[A]\), we have \[(M+I_A)[A]\, v = (M[A] + I_{A})v = v.\] Then \(\ker M[A]\subseteq\operatorname{col}(M[A]+I_{A})\). Hence
\[
\begin{aligned}
\operatorname{corank}(M[A])
&= \dim\ker M[A] \le \dim\operatorname{col}(M[A]+I_{A})= \operatorname{rank}(M[A]+I_{A})\\
&=\operatorname{rank}((M+I_A)[A]) \le \operatorname{rank}(M+I_A).
\end{aligned}
\]
Therefore,
\[
r_{\langle\tau\delta\rangle}(M,A)=\operatorname{rank}(M+I_A)-\operatorname{corank}(M[A])\ge 0.
\]

\medskip\noindent
\textbf{Case \(\bullet=\tau\delta\tau\):}
For any \(v\in\ker (M[A]+I_{A})\), we have $M[A]v=-v.$ Thus \(\ker (M[A]+I_{A})\subseteq\operatorname{col}(M[A])\). Hence
\[
\begin{aligned}
\operatorname{rank}(M)&\ge\operatorname{rank}(M[A])=\dim \operatorname{col} M[A]\ge\ker (M[A]+I_{A})\\
&=\operatorname{corank}((M[A]+I_{A}))=\operatorname{corank}((M+I_A)[A]).
\end{aligned}
\]
Therefore,  \[r_{\langle\tau\delta\tau\rangle}(M,A)=\operatorname{rank}(M)-\operatorname{corank}((M+I_A)[A])\ge 0.\]
\end{proof}

\begin{lemma}[Combinatorial Nullstellensatz \cite{Alon}]\label{Alontheorem}
Let $\mathbb{F}$ be a field, let $d_1,\dots,d_n$ be non-negative integers, and let $P \in \mathbb{F}[x_1,\dots,x_n]$ be a polynomial. Suppose that $\deg P = \sum_{i=1}^n d_i$, and  the coefficient of $x_1^{d_1}\dots x_n^{d_n}$ is nonzero.
Then for every subsets $S_1,\dots,S_n$ of $\mathbb{F}$ with $|S_i| > d_i$, there exists $(s_1,\dots,s_n) \in S_1 \times \dots \times S_n$ such that
\[
P(s_1,\dots,s_n) \neq 0.
\]
\end{lemma}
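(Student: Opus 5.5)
The statement just given is Alon's Combinatorial Nullstellensatz, which the paper quotes from \cite{Alon}. The plan is to reproduce the standard induction argument via the polynomial-division form of the Nullstellensatz.

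\textbf{Step 1: the division lemma.} First we establish the auxiliary statement:
\begin{quote}
If $P\in\mathbb{F}[x_1,\dots,x_n]$ vanishes on $S_1\times\dots\times S_n$, with each $S_i$ finite, then there are polynomials $h_i$ with $\deg h_i\le \deg P-|S_i|$ such that
\[
P=\sum_{i=1}^n g_i h_i, \qquad g_i(x_i)=\prod_{s\in S_i}(x_i-s).
\]
\end{quote}
We may shrink each $S_i$ to size exactly $t_i=d_i+1$, since the hypothesis $|S_i|>d_i$ is preserved. Write $g_i=x_i^{t_i}-\sum_{j<t_i}g_{ij}x_i^j$. Repeatedly replace any occurrence of $x_i^{t_i}$ in a monomial of $P$ by $\sum_j g_{ij}x_i^j$. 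Each replacement subtracts a multiple of $g_i$ and does not raise the total degree. The process terminates with a polynomial $\bar P$ in which every $x_i$-degree is less than $t_i$, and $P-\bar P=\sum g_ih_i$ with the stated degree bounds.

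Next we show $\bar P=0$. The polynomial $\bar P$ still vanishes on the grid, because each $g_i$ vanishes there. We then argue by induction on $n$: view $\bar P$ as a polynomial in $x_n$ of degree less than $t_n$ whose coefficients are polynomials in $x_1,\dots,x_{n-1}$. Fix any point of $S_1\times\dots\times S_{n-1}$. The resulting one-variable polynomial in $x_n$ has $t_n$ roots but degree less than $t_n$, so it is identically zero. Hence every coefficient vanishes on the smaller grid, and the induction hypothesis forces each coefficient to be zero.

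\textbf{Step 2: deduce the theorem by contradiction.} Suppose $P$ vanishes on the whole grid $S_1\times\dots\times S_n$. Step 1 gives $P=\sum g_ih_i$ with $\deg h_i\le \deg P-t_i$. Now compare coefficients of the monomial $x_1^{d_1}\cdots x_n^{d_n}$, which has total degree $\deg P$.

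On the right-hand side, only top-degree parts can contribute, since this monomial has maximal degree. The top-degree part of $g_ih_i$ is $x_i^{t_i}$ times the top-degree part of $h_i$. Every monomial in it is therefore divisible by $x_i^{t_i}=x_i^{d_i+1}$, so it cannot equal $x_1^{d_1}\cdots x_n^{d_n}$. The coefficient of that monomial on the right is thus $0$, contradicting the hypothesis that it is nonzero in $P$.

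\textbf{Main obstacle.} The delicate point is the degree bookkeeping in Step 1. We must check that each reduction subtracts $g_i$ times a polynomial of degree at most $\deg P-t_i$. We also need the top-degree comparison in Step 2, which uses $\deg(g_ih_i)\le\deg P$ and the fact that $g_i$ has leading term $x_i^{t_i}$. The remaining steps are routine.
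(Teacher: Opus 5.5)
Your proof is correct. The paper gives no proof of this lemma and simply quotes it from Alon; your argument is exactly Alon's original one: reduce $P$ modulo the $g_i$ to get the division form with $\deg h_i\le\deg P-|S_i|$, kill the reduced polynomial by the grid-induction lemma, then compare the coefficient of $x_1^{d_1}\cdots x_n^{d_n}$ in the degree-$\deg P$ components. The only cosmetic point is that shrinking each $S_i$ to size $d_i+1$ belongs in Step 2, not in the division lemma, where the $d_i$ play no role.
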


\begin{lemma}\label{fullrank}
Let $M\in \mathbb{F}^{n\times n}$ be a matrix.
Then there exists a diagonal matrix $D=\operatorname{diag}(c_1,\dots,c_n)$ with $c_i\in\{0,1\}$ for $i=1,\dots,n$ such that $M+D$ is non-singular.
\end{lemma}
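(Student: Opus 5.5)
The natural route is the Combinatorial Nullstellensatz (Lemma~\ref{Alontheorem}), applied to the polynomial
\[
P(x_1,\dots,x_n)=\det\bigl(M+\operatorname{diag}(x_1,\dots,x_n)\bigr)\in\mathbb{F}[x_1,\dots,x_n],
\]
with $S_1=\dots=S_n=\{0,1\}$. A point $(s_1,\dots,s_n)\in\{0,1\}^n$ with $P(s_1,\dots,s_n)\neq 0$ is exactly a choice $c_i=s_i$ making $M+D$ non-singular, which is what Lemma~\ref{fullrank} asserts.

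The only thing to verify is the hypothesis of Lemma~\ref{Alontheorem}, which requires understanding the monomial structure of $P$. Expand the determinant through the principal-minor expansion
\[
\det\bigl(M+\operatorname{diag}(x)\bigr)=\sum_{T\subseteq [n]} \Bigl(\prod_{i\in T} x_i\Bigr)\det\bigl(M[[n]\setminus T]\bigr).
\]
This follows by multilinearity of the determinant in the columns: write column $i$ as $M e_i + x_i e_i$ and expand. From this expansion:
\begin{itemize}
\item $P$ is multilinear, that is, each variable appears to degree at most one.
\item The term with $T=[n]$ is $x_1x_2\cdots x_n$, with coefficient $\det$ of the empty matrix, which equals $1\neq 0$.
\item Every other monomial has degree $|T|<n$, so $\deg P=n$.
\end{itemize}
Hence we may take $d_1=\dots=d_n=1$. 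Then $\deg P=\sum d_i=n$, the coefficient of $x_1^{d_1}\cdots x_n^{d_n}$ is $1$, and $|S_i|=2>1=d_i$. Lemma~\ref{Alontheorem} now gives the required point.

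There is no real obstacle. The one step requiring care is justifying the principal-minor expansion, in particular that the coefficient of the top monomial is exactly $1$ in any field; this holds in every characteristic, GF(2) included, since no cancellation can occur in the top term. The case $n=0$ is trivial.

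As a sanity check, there is also an elementary induction that avoids the Nullstellensatz. Expand $\det(M+D)$ along the last row and column: the determinant is affine in $c_n$, namely
\[
\det(M+D)=\det(M'+D')\,c_n+\text{const},
\]
where $M'$ and $D'$ are the leading $(n-1)\times(n-1)$ blocks. By induction choose $D'$ so that $\det(M'+D')\neq 0$. The expression is then a non-constant affine function of $c_n$, so it is non-zero for at least one of $c_n\in\{0,1\}$. I would present the Nullstellensatz version, since that is clearly the intended use of Lemma~\ref{Alontheorem}, and mention the induction as an alternative.
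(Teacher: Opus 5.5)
Your proof is correct and matches the paper's: both apply the Combinatorial Nullstellensatz to $\det(M+\operatorname{diag}(c_1,\dots,c_n))$ with $d_i=1$ and $S_i=\{0,1\}$. The only difference is cosmetic: the paper isolates the coefficient $1$ of $c_1\cdots c_n$ via the Leibniz formula (only the identity permutation contributes), whereas you use the principal-minor expansion, and your inductive alternative is also valid and avoids the Nullstellensatz altogether.
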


\begin{proof}
Consider the determinant of $M+D$ as a polynomial in the indeterminates $c_1,\dots,c_n$ with coefficients in $\mathbb{F}$:
\[
f(c_1,\dots,c_n)=\det\!\bigl(M+\operatorname{diag}(c_1,\dots,c_n)\bigr)\in\mathbb{F}[c_1,\dots,c_n].
\]
Expand $f$ via the Leibniz formula:
\[
f(c_1,\dots,c_n)=\sum_{\pi\in S_n}\operatorname{sgn}(\pi)\prod_{i=1}^n\bigl(M+\operatorname{diag}(c_1,\dots,c_n)\bigr)_{i,\pi(i)}.
\]
For the identity permutation $\pi=\mathrm{id}$, the product equals
\[
\prod_{i=1}^n (M_{ii}+c_i).
\]
In this product the monomial $c_1c_2\cdots c_n$ appears with coefficient $1$. Any non-identity permutation $\pi$ satisfies $\pi(i)\neq i$ for at least one index $i$; for such an $i$ the factor $(M+\operatorname{diag}(c_1,\dots,c_n))_{i,\pi(i)}$ is the off-diagonal entry $M_{i,\pi(i)}$, which is independent of all $c_j$. Consequently the monomial $c_1\cdots c_n$ does not occur in the contribution of any $\pi\neq\mathrm{id}$. Hence $f$ contains the monomial $c_1\cdots c_n$ with coefficient $1$ and is therefore a nonzero polynomial.

We now apply Lemma~\ref{Alontheorem} with $d_i=1$ for all $i$. The polynomial $f$ has total degree $n$ and the coefficient of $c_1\cdots c_n$ is nonzero. For each $i$ take $S_i = \{0,1\}\subseteq\mathbb{F}$, which satisfies $|S_i| = 2 > 1 = d_i$. By Lemma \ref{Alontheorem}, there exists a tuple $(\bar c_1,\dots,\bar c_n)\in\{0,1\}^n$ such that $f(\bar c_1,\dots,\bar c_n)\neq 0$, i.e.,
\[
\det\!\bigl(M+\operatorname{diag}(\bar c_1,\dots,\bar c_n)\bigr)\neq 0.
\]
Thus $M+\operatorname{diag}(\bar c_1,\dots,\bar c_n)$ is non-singular. Setting $c_i = \bar c_i$ for $i=1,\dots,n$ yields the desired diagonal matrix $D$.
\end{proof}

\begin{proposition}\label{new:lem05}
Let \(M\) be a square matrix over a field, with rows and columns indexed by a finite set \(V\). Then
\begin{enumerate}
    \item[$(1)$] The partial-$\langle \tau \rangle$ polynomial satisfies
        \[
        \deg P_{\langle \tau \rangle}(M, z) = |V|.
        \]
    \item[$(2)$] The partial-$\langle \delta \rangle$, $\langle \delta\tau \rangle$, $\langle \tau\delta \rangle$ and $\langle \tau\delta\tau \rangle$ polynomials satisfy
        \[
        \deg P_{\langle \bullet \rangle}(M, z) \leq |V|, \quad \text{for } \bullet \in \{\delta, \delta\tau, \tau\delta, \tau\delta\tau\}.
        \]
\end{enumerate}

\end{proposition}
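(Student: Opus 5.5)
The plan is to bound each exponent $r_{\langle\bullet\rangle}(M,A)$ from above by $|V|$ for every $A\subseteq V$, and, for $\langle\tau\rangle$, to exhibit one $A$ attaining $|V|$. Since Proposition~\ref{mindegree} already shows all exponents are non-negative integers, $P_{\langle\bullet\rangle}(M,z)$ is a genuine polynomial with positive integer coefficients. There is therefore no cancellation, and its degree is exactly the maximum of the exponents.

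For part (1), the upper bound is immediate: $\operatorname{rank}(M+I_A)\le |V|$ for every $A$. For attainment I invoke Lemma~\ref{fullrank}. It gives a diagonal $D=\operatorname{diag}(c_v)$ with $c_v\in\{0,1\}$ such that $M+D$ is non-singular. Setting $A=\{v: c_v=1\}$ gives $D=I_A$, so $\operatorname{rank}(M+I_A)=|V|$ and the coefficient of $z^{|V|}$ is at least $1$. Hence $\deg P_{\langle\tau\rangle}(M,z)=|V|$. The empty matrix is trivially consistent.

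For part (2) I treat each case directly.
\begin{itemize}
\item For $\langle\delta\rangle$: $\operatorname{rank}(M[A])+\operatorname{rank}(M[A^c])\le |A|+|A^c|=|V|$.
\item For $\langle\delta\tau\rangle$: $\operatorname{rank}((M+I_A)[A])\le|A|$ and $\operatorname{rank}(M[A^c])\le|A^c|$, giving the same bound.
\item For $\langle\tau\delta\rangle$: $\operatorname{rank}(M+I_A)-\operatorname{corank}(M[A])\le \operatorname{rank}(M+I_A)\le|V|$, since coranks are non-negative.
\item For $\langle\tau\delta\tau\rangle$: $\operatorname{rank}(M)-\operatorname{corank}((M+I_A)[A])\le\operatorname{rank}(M)\le|V|$.
\end{itemize}
Combining these with the non-negativity from Proposition~\ref{mindegree} yields $\deg P_{\langle\bullet\rangle}(M,z)\le|V|$.

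The only substantive step is the attainment in part (1), and it is handled entirely by Lemma~\ref{fullrank}. The point to check is that the Nullstellensatz argument there works over an arbitrary field, including $\mathrm{GF}(2)$, because $S_i=\{0,1\}$ has two elements. Everything else consists of elementary rank inequalities.
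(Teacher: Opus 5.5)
Your proposal is correct and follows the paper's proof essentially verbatim. It uses the same termwise rank/corank upper bounds by $|V|$ for all five exponents, and obtains attainment for $\langle\tau\rangle$ from Lemma~\ref{fullrank} by taking $A=\{v : c_v=1\}$, so that $M+I_A$ is non-singular.
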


\begin{proof}
We first establish the upper bounds for all five polynomials. For any subset $A \subseteq V$,
\begin{align*}
&\operatorname{rank}(M[A]) + \operatorname{rank}(M[A^c]) \leq |A| + |A^c| = |V|, \\
&\operatorname{rank}(M+ I_A) \leq |V|, \\
&\operatorname{rank}((M+ I_A)[A]) + \operatorname{rank}(M[A^c]) \leq |A| + |A^c| = |V|, \\
&\operatorname{rank}(M+ I_A) - \operatorname{corank}(M[A]) \leq \operatorname{rank}(M + I_A) \leq |V|, \\
&\operatorname{rank}(M)- \operatorname{corank}((M+ I_A)[A]) \leq \operatorname{rank}(M) \leq |V|.
\end{align*}
Consequently,
\[
\deg P_{\langle \bullet \rangle}(M, z)  \leq |V| \quad \text{for } \bullet \in \{\delta, \tau, \delta\tau, \tau\delta, \tau\delta\tau\}.
\]
In particular, statement (2) follows directly. It remains to show that the bound $|V|$ is attained for $\langle \tau \rangle$.

By Lemma~\ref{fullrank}, there exists a diagonal matrix $D = \operatorname{diag}(d_1,\dots,d_{|V|})$ with $d_i\in\{0,1\}$ such that $M + D$ is non-singular. Set $A = \{ i \mid d_i = 1 \}$. Then $\operatorname{rank}(M + I_A) = |V|$, which gives the term $z^{|V|}$ in $P_{\langle \tau \rangle}(M, z)$.  Thus $\deg P_{\langle \tau \rangle}(M, z) = |V|$.
\end{proof}

\section{Interpolation behavior}

Given a polynomial \(P(z) = \sum_i c_i z^i \), let its support be \[\operatorname{supp}(P) = \{ i \mid c_i \neq 0 \}. \] We say that \( P(z) \) has a \emph{gap of size \( k \)} if there exists an integer \( i \) such that \( i-1, i+k \in \operatorname{supp}(P) \) but \( i, i+1, \dots, i+k-1 \notin \operatorname{supp}(P) \). In particular, when \( k \ge 2 \) we say that \( P(z) \) has a gap of size at least \(2\). If \( p(z) \) is nonzero and has no gaps, it is called \emph{interpolating}. Expressing \( P(z) \) as the sum of its even-degree and odd-degree parts, \( P(z) = P_e(z^2) + z\, P_o(z^2) \), we call \( P(z) \) \emph{even-interpolating} (resp.\ \emph{odd-interpolating}) when \( P_e \) (resp.\ \( P_o \)) is nonzero and interpolating. Finally, \( P(z) \) is an \emph{even polynomial} (resp.\ \emph{odd polynomial}) if its support contains only even (resp.\ odd) numbers, i.e., \( P_o=0 \) (resp.\ \( P_e=0 \)).

\begin{lemma}\label{the01}
Let \(M\) be a square matrix over a field $\mathbb{F}$, with rows and columns indexed by a finite set \(V\), let $v\in V$ and let $S_0\subseteq V$ with $v\notin S_0$.
Set $S_1 = S_0 \cup \{v\}$. Then the following hold:

\begin{enumerate}
\item[$(1)$] $\displaystyle \operatorname{rank}((M+I_{S_1})[S_1]) -
          \operatorname{rank}((M+I_{S_0})[S_0]) \in \{0, 1, 2\}$.

\item[$(2)$] $\displaystyle \operatorname{rank}(M[S_1]) - \operatorname{rank}(M[S_0]) \in \{0,1,2\}$.
\end{enumerate}
\end{lemma}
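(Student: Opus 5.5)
The plan is to reduce both parts to one elementary fact about bordered matrices. If a square matrix $N$ is obtained from a square matrix $N_0$ by adjoining one new row and one new column, then
\[
0 \le \operatorname{rank}(N)-\operatorname{rank}(N_0)\le 2.
\]
In both parts of the lemma, the matrix indexed by $S_1$ is exactly such a bordering of the matrix indexed by $S_0$, so this fact gives the claim directly.

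\textbf{Step 1: the bordering fact.} Let $N_0$ be the principal submatrix of $N$ on the old indices. Write $N'$ for the matrix consisting of the rows of $N$ indexed by $S_0$ together with all columns of $S_1$; this is $N_0$ with one extra column. Deleting a column never increases rank and lowers it by at most one, so
\[
\operatorname{rank}(N_0)\le \operatorname{rank}(N')\le \operatorname{rank}(N_0)+1 .
\]
The same argument applied to rows, passing from $N'$ to $N$, gives
\[
\operatorname{rank}(N')\le \operatorname{rank}(N)\le \operatorname{rank}(N')+1 .
\]
Chaining the two inequalities shows that the difference $\operatorname{rank}(N)-\operatorname{rank}(N_0)$ lies in $\{0,1,2\}$.

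\textbf{Step 2: part (2).} Here we take $N=M[S_1]$. Because $M[S_0]$ is the principal submatrix of $M[S_1]$ obtained by deleting the row and column indexed by $v$, Step 1 applies immediately.

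\textbf{Step 3: part (1).} The key check is that $(M+I_{S_0})[S_0]$ is the principal submatrix of $(M+I_{S_1})[S_1]$ on $S_0$. This holds because $I_{S_1}$ and $I_{S_0}$ agree on every index of $S_0$; they differ only in the $(v,v)$ entry, and that entry is deleted together with row and column $v$. Equivalently, both matrices equal $M[\cdot]$ plus the identity of the appropriate size, so the restriction is consistent. With this identification, Step 1 gives part (1).

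There is no real obstacle here. The only point needing care is the compatibility of the diagonal perturbation with restriction in Step 3. No earlier results from the paper are needed, since the proof uses only elementary rank inequalities that hold over any field $\mathbb{F}$.
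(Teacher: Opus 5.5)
Your proof is correct and follows essentially the same route as the paper. The paper likewise treats $(M+I_{S_1})[S_1]$ as a bordering of $(M+I_{S_0})[S_0]$ (corner entry $M[v,v]+1$), bounds the rank increase by one for the added column and one for the added row using row spaces, and repeats the argument for $M[S_1]$ versus $M[S_0]$; your explicit observation that the rank cannot drop when passing to a larger matrix states the lower bound $0$ more cleanly than the paper does.
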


\begin{proof} {\bf (1)}
Let $M_0 = (M+I_{S_0})[S_0]$ and $M_1 = (M+I_{S_1})[S_1]$. Since $S_1 = S_0 \cup \{v\}$ with $v \notin S_0$, we can write $M_1$ in block form
\[
M_1 = \begin{bmatrix} a & \alpha^{\!\top} \\ \beta & M_0 \end{bmatrix},
\]
where $a = M[v,v]+1$, and $\alpha = (\alpha_1,\dots,\alpha_{|S_0|})^{\!\top}$ and $\beta = (\beta_1,\dots,\beta_{|S_0|})^{\!\top}$ are column vectors of length $|S_0|$ over $\mathbb{F}$. Consider the row vectors of $M_1$:
\[
\mathbf r_0 = (a,\; \alpha^{\!\top}), \qquad \mathbf r_i = (\beta_i,\; \mathbf m_i) \;\; (i=1,\dots,|S_0|),
\]
where $\mathbf m_i$ is the $i$-th row of $M_0$. Let $V$ be the subspace spanned by $$\{\mathbf r_1,\dots,\mathbf r_{|S_0|}\}.$$

Observe that $V$ is the row space of the matrix $[\beta \ M_0]$, which is obtained by adding the column $\beta$ to the left of $M_0$. Since adding a new column increases the rank by at most one, we have
\[
\dim V = \operatorname{rank}([\beta \ M_0]) \in \{\operatorname{rank}(M_0),\; \operatorname{rank}(M_0)+1\}.
\]

Adding $\mathbf r_0$ to $V$ can increase the dimension by at most one, so
\[
\operatorname{rank}(M_1) = \dim \operatorname{span}(V \cup \{\mathbf r_0\}) \le \dim V + 1,
\]
and therefore $\operatorname{rank}(M_1) \in \{\operatorname{rank}(M_0),\; \operatorname{rank}(M_0)+1,\; \operatorname{rank}(M_0)+2\}$.
This completes the proof of (1).

\noindent{\bf (2)}
Set $N_0 = M[S_0]$ and $N_1 = M[S_1]$.
Since $S_1 = S_0 \cup \{v\}$, we can write $N_1$ in block form
\[
N_1 = \begin{bmatrix} b & \gamma^{\!\top} \\ \delta & N_0 \end{bmatrix},
\]
where $b = M[v,v]$, $\gamma^{\!\top}$ is the row vector $M[v, S_0]$, and $\delta$ is the column vector $M[S_0, v]$. The same row-space analysis used in the proof of (1) applied to $N_0$ and $N_1$ shows that
\[
\operatorname{rank}(N_1) \in \{\operatorname{rank}(N_0),\; \operatorname{rank}(N_0)+1,\; \operatorname{rank}(N_0)+2\}.
\]
Hence $\operatorname{rank}(N_1) - \operatorname{rank}(N_0) \in \{0,1,2\}$, which proves (2).
\end{proof}

\begin{theorem}\label{twointerpolating}
	Let \(M\) be a square matrix over a field, with rows and columns indexed by a finite set \(V\). Then
\begin{enumerate}
\item[$(1)$] $P_{\langle \bullet \rangle}(M,z)$ is interpolating for $\bullet \in \left\{\tau, \tau\delta\tau\right\}$.
\item[$(2)$] For $\bullet \in \left\{\delta,\delta\tau,\tau\delta\right\}$, $P_{\langle \bullet \rangle}(M,z)$ has no gaps of size $2$ or larger. In particular, if $P_{\langle \bullet \rangle}(M,z)$ is an even (resp. odd) polynomial, then it is even-interpolating (resp. odd-interpolating).
\item[$(3)$] If \(M\) is symmetric and has zero diagonal, then \(P_{\langle \delta \rangle}(M,z)\) is even-interpolating.
\end{enumerate}
\end{theorem}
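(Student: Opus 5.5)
The approach is a ``walk through the Boolean lattice'' argument. Order the subsets $A\subseteq V$ by adding one element at a time, and bound how much each exponent $r_{\langle\bullet\rangle}(M,A)$ can change when a single element $v$ is moved into $A$. If every one-step change has absolute value at most $1$, then the exponents along a maximal chain from $\emptyset$ to $V$ take every integer value between their endpoints. If every one-step change has absolute value at most $2$, no gap of size $2$ or more can appear. To get conclusions about the whole support, I would fix $A$ realizing the minimum exponent and $B$ realizing the maximum, and walk $A\to A\cap B\to B$ (removing, then adding, single elements). Every intermediate set is some subset, so its exponent is in the support. Consecutive exponents differ by at most the step bound, hence the support contains every integer between its min and max (respectively, has no gaps of size $\ge 2$).

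The one-step bounds would come from a few elementary facts. Changing a single diagonal entry, and more generally any rank-one perturbation, changes $\operatorname{rank}$ by at most $1$, so $\operatorname{rank}(M+I_A)$ and $\operatorname{rank}(M+I_{A\cup v})$ differ by at most $1$. Lemma~\ref{the01} together with the trivial lower bound gives that bordering a principal submatrix by one row and column changes rank by $0$, $1$ or $2$. Since $\operatorname{corank}(N[S])=|S|-\operatorname{rank}(N[S])$, the corank of a principal submatrix changes by at most $1$ in either direction.

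\textbf{For $\tau$.} The exponent $\operatorname{rank}(M+I_A)$ changes by at most $1$, which gives interpolation.

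\textbf{For $\tau\delta\tau$.} The exponent is $\operatorname{rank}(M)-\operatorname{corank}((M+I_A)[A])$. Passing from $A$ to $A\cup v$ both borders and shifts the new diagonal entry, and $(M+I_{A\cup v})[A\cup v]$ is exactly the bordered matrix in Lemma~\ref{the01}(1). Its rank increases by $0$, $1$ or $2$ while the size increases by $1$, so the corank changes by $+1$, $0$ or $-1$. This gives interpolation.

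\textbf{For $\delta$.} Moving $v$ from $A^c$ to $A$ changes $\operatorname{rank}M[A]$ by $0..2$ and $\operatorname{rank}M[A^c]$ by $-2..0$, so a naive count only gives a change in $[-2,2]$. I would refine this. Each rank change is ``size change $\pm$ corank change'' with the corank moving by at most $1$. Writing the total change as $(\Delta|A|-\Delta\operatorname{corank}M[A])+(\Delta|A^c|-\Delta\operatorname{corank}M[A^c])$, the size terms cancel, leaving the difference of two corank changes. This lies in $[-2,2]$, so no gap of size $\ge 2$ appears.

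\textbf{For $\delta\tau$ and $\tau\delta$.} The same bookkeeping applies. For $\delta\tau$ the change is a rank change of a bordered $(M+I)$-submatrix in $[0,2]$ plus a rank change in $[-2,0]$, and rewriting via coranks gives $|\Delta|\le 2$. For $\tau\delta$ the change is the rank of $M+I_A$, which moves by $\pm1$ or $0$, minus a corank change in $[-1,1]$, so $|\Delta|\le 2$. The ``in particular'' clause is immediate: an even polynomial with no gaps of size $\ge 2$ has consecutive support elements differing by exactly $2$.

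\textbf{Part (3).} For $M$ symmetric with zero diagonal, the step I expect to be the main obstacle is the parity claim that $\operatorname{rank}$ of every principal submatrix is even. This holds for alternating matrices in characteristic $2$, or over any field for skew-symmetric matrices. But a symmetric zero-diagonal matrix over $\mathbb{R}$ can have odd rank: for instance $\begin{pmatrix}0&1&1\\1&0&1\\1&1&0\end{pmatrix}$ has rank $3$. So I would first check the intended scope. Over $\mathrm{GF}(2)$ a symmetric zero-diagonal matrix is alternating, so every $r_{\langle\delta\rangle}(M,A)$ is a sum of two even numbers and hence even. Combined with part (2)'s no-gap-of-size-$\ge 2$ property, this makes $P_{\langle\delta\rangle}$ an even polynomial and therefore even-interpolating. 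If the statement is meant over general fields, I would attempt the argument only under the alternating hypothesis and flag the discrepancy.
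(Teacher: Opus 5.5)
Your proposal for parts (1) and (2) matches the paper's proof. You use the same one-step bounds: a rank-one perturbation for $\tau$, and Lemma~\ref{the01} together with $\operatorname{corank}=|S|-\operatorname{rank}$ for the other operators. This gives $|\Delta r|\le 1$ for $\tau,\tau\delta\tau$ and $|\Delta r|\le 2$ for $\delta,\delta\tau,\tau\delta$. You then walk between subsets, which the paper phrases as connectivity of the hypercube graph. Your ``refinement'' in the $\delta$ case gives the same interval $[-2,2]$ as the naive count, which is all that is needed.

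For part (3) your suspicion is right, and the problem is worse than you say. The paper's proof uses exactly the claim that a symmetric zero-diagonal matrix has even rank, asserted over an arbitrary field. In characteristic $\neq 2$ it is the statement itself that fails, not just that step.

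Your $J_3-I_3$ example does not show this: there $P_{\langle\delta\rangle}(M,z)=2z^3+6z^2$, which is still even-interpolating. For a genuine counterexample over $\mathbb{Q}$, take the points $p_1,\dots,p_5=(1,0),(0,1),(-1,0),(0,-1),(3/5,4/5)$ on the unit circle and set $M=-5\bigl(p_i\cdot p_j-1\bigr)_{i,j}$, that is,
\[
M=\begin{pmatrix}0&5&10&5&2\\5&0&5&10&1\\10&5&0&5&8\\5&10&5&0&9\\2&1&8&9&0\end{pmatrix}.
\]
This $M$ has the following properties.
\begin{itemize}
\item It equals $-5\,UDU^{\top}$, where $U$ has rows $(p_i,1)$ and $D=\operatorname{diag}(1,1,-1)$, so $M$ has rank $3$.
\item All off-diagonal entries are nonzero, so every $2\times 2$ principal submatrix has rank $2$.
\item Every $3\times 3$ principal submatrix has determinant $2abc\neq 0$, where $a,b,c$ are its off-diagonal entries.
\item Consequently every $4\times 4$ principal submatrix has rank exactly $3$.
\end{itemize}
Hence $r_{\langle\delta\rangle}(M,A)=3$ for $|A|\in\{0,1,4,5\}$ and $r_{\langle\delta\rangle}(M,A)=5$ for $|A|\in\{2,3\}$. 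This gives $P_{\langle\delta\rangle}(M,z)=12z^3+20z^5$, an odd polynomial with $P_e=0$, so it is not even-interpolating.

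So (3) must be restricted to characteristic $2$, or to alternating matrices. In that setting your argument, which coincides with the paper's, is valid.
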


\begin{proof}

Take any $A_0\subseteq V$ and $v\in V\setminus A_0$, and set $A_1=A_0\cup\{v\}$. We estimate the absolute value of $r_{\langle \bullet \rangle}(M, A_1)-r_{\langle \bullet \rangle}(M, A_0)$.

\medskip\noindent
\textbf{Case $\bullet=\tau$:}
We have $$r_{\langle \tau \rangle}(M, A_1)-r_{\langle \tau \rangle}(M, A_0)=\operatorname{rank}(M+I_{A_1})-\operatorname{rank}(M+I_{A_0}).$$
Set $\mathbf A_0=M+I_{A_0}$ and $\mathbf A_1=M+I_{A_1}$.
Since $A_1=A_0\cup{v}$, the set $A_1$ differs from $A_0$ exactly at $v$, so $\mathbf A_1$ and $\mathbf A_0$ coincide everywhere except at the diagonal entry corresponding to $v$. Thus $\mathbf A_1 = \mathbf A_0 + \mathbf u_v\mathbf u_v^{\!\top}$, where $\mathbf u_v$ is the standard basis vector with a $1$ in the $v$-th coordinate and $0$ elsewhere.
Let $C_0$ and $C_1$ be the column spaces of $\mathbf A_0$ and $\mathbf A_1$, respectively.
Because each column of $\mathbf A_1$ is either equal to the corresponding column of $\mathbf A_0$ or differs by $\mathbf u_v$, we have $C_1 \subseteq C_0 + \operatorname{span}{\mathbf u_v}$, whence $\dim C_1 \le \dim C_0 + 1$.
Symmetrically, $\mathbf A_0 = \mathbf A_1 - \mathbf u_v\mathbf u_v^{\!\top}$ gives $C_0 \subseteq C_1 + \operatorname{span}{\mathbf u_v}$, so $\dim C_0 \le \dim C_1 + 1$.
Thus $|\dim C_1 - \dim C_0| \le 1$, i.e., $|\operatorname{rank}(\mathbf A_1) - \operatorname{rank}(\mathbf A_0)| \le 1$.
Hence $|r_{\langle \tau \rangle}(M, A_1)-r_{\langle \tau \rangle}(M, A_0)|\le 1$.

\medskip\noindent
\textbf{Case $\bullet=\tau\delta\tau$:}
We have $r_{\langle \tau\delta\tau\rangle}(M, A)=\operatorname{rank}(M)-\operatorname{corank}((M+I_A)[A])$.
Note that
\[
\begin{aligned}
&~~~~\operatorname{corank}((M+I_{A_1})[A_1])-\operatorname{corank}((M+I_{A_0})[A_0]) \\
&= \bigl(|A_1|-\operatorname{rank}((M+I_{A_1})[A_1])\bigr) - \bigl(|A_0|-\operatorname{rank}((M+I_{A_0})[A_0])\bigr) \\
&= 1 - \bigl(\operatorname{rank}((M+I_{A_1})[A_1])-\operatorname{rank}((M+I_{A_0})[A_0])\bigr).
\end{aligned}
\]
By Lemma \ref{the01} (1), we have
\[
\operatorname{rank}((M+I_{A_1})[A_1])-\operatorname{rank}((M+I_{A_0})[A_0])\in\{0,1,2\}.
\]
Consequently,
\[
\operatorname{corank}((M+I_{A_1})[A_1])-\operatorname{corank}((M+I_{A_0})[A_0])\in\{1,0,-1\},
\]
so
\[
|\operatorname{corank}((M+I_{A_1})[A_1])-\operatorname{corank}((M+I_{A_0})[A_0])|\le 1.
\]
Since $\operatorname{rank}(M)$ is constant, we obtain $|r_{\langle \tau\delta\tau\rangle}(M_1, A_1)-r_{\langle \tau\delta\tau\rangle}(M_0, A_0)|\le 1$.

\medskip\noindent
\textbf{Case $\bullet=\delta$:}
We have
\[
\begin{aligned}
r_{\langle \delta \rangle}(M, A_1) - r_{\langle \delta \rangle}(M, A_0)
&= \bigl(\operatorname{rank}(M[A_1]) + \operatorname{rank}(M[A_1^c])\bigr) \\
&\quad - \bigl(\operatorname{rank}(M[A_0]) + \operatorname{rank}(M[A_0^c])\bigr).
\end{aligned}
\]
By Lemma \ref{the01} (2), we have
\[
\operatorname{rank}(M[A_1]) - \operatorname{rank}(M[A_0]) \in \{0,1,2\}.
\]
Notice that $A_0^c = A_1^c \cup \{v\}$. Applying Lemma \ref{the01} (2) again gives
\[
\operatorname{rank}(M[A_0^c]) - \operatorname{rank}(M[A_1^c]) \in \{0,1,2\},
\]
i.e.,
\[
\operatorname{rank}(M[A_1^c]) - \operatorname{rank}(M[A_0^c]) \in \{0,-1,-2\}.
\]
Adding the two differences we obtain
\[
r_{\langle \delta \rangle}(M, A_1) - r_{\langle \delta \rangle}(M, A_0) \in \{0,\pm1,\pm2\},
\]
so
\[
|r_{\langle \delta \rangle}(M, A_1) - r_{\langle \delta \rangle}(M, A_0)| \le 2.
\]

\medskip\noindent
\textbf{Case $\bullet=\delta\tau$:}
We have
\[
\begin{aligned}
r_{\langle \delta\tau \rangle}(M, A_1) - r_{\langle \delta\tau \rangle}(M, A_0)
&= \bigl(\operatorname{rank}((M+I_{A_1})[A_1]) + \operatorname{rank}(M[A_1^c])\bigr) \\
&~- \bigl(\operatorname{rank}((M+I_{A_0})[A_0]) + \operatorname{rank}(M[A_0^c])\bigr).
\end{aligned}
\]
By Lemma \ref{the01} (1), we have
\[
\operatorname{rank}((M+I_{A_1})[A_1]) - \operatorname{rank}((M+I_{A_0})[A_0]) \in \{0,1,2\}.
\]
As in the previous case,
\[
\operatorname{rank}(M[A_1^c]) - \operatorname{rank}(M[A_0^c]) \in \{0,-1,-2\}.
\]
Therefore
\[
r_{\langle \delta\tau \rangle}(M, A_1) - r_{\langle \delta\tau \rangle}(M, A_0) \in \{0,\pm1,\pm2\},
\]
and thus
\[
|r_{\langle \delta\tau \rangle}(M, A_1) - r_{\langle \delta\tau \rangle}(M, A_0)| \le 2.
\]

\medskip\noindent
\textbf{Case $\bullet=\tau\delta$:}
We have
\[
\begin{aligned}
r_{\langle \tau\delta \rangle}(M, A_1) - r_{\langle \tau\delta \rangle}(M, A_0)
&= \bigl(\operatorname{rank}(M + I_{A_1}) - \operatorname{rank}(M + I_{A_0})\bigr) \\
&~- \bigl(\operatorname{corank}(M[A_1]) - \operatorname{corank}(M[A_0])\bigr).
\end{aligned}
\]
From the case $\bullet=\tau$ we know $a:=\operatorname{rank}(M + I_{A_1}) - \operatorname{rank}(M + I_{A_0}) \in \{0, \pm 1\}$.
By Lemma \ref{the01} (2), $b:=\operatorname{rank}(M[A_1]) - \operatorname{rank}(M[A_0]) \in \{0, 1, 2\}$.
Now
\[
\begin{aligned}
&~~~~~\operatorname{corank}(M[A_1]) - \operatorname{corank}(M[A_0])\\
&= \bigl(|A_1| - \operatorname{rank}(M[A_1])\bigr) - \bigl(|A_0| - \operatorname{rank}(M[A_0])\bigr) \\
&= 1 - b.
\end{aligned}
\]
Thus $r_{\langle \tau\delta \rangle}(M, A_1) - r_{\langle \tau\delta \rangle}(M, A_0) = a - (1 - b) = a + b - 1$.
Since $a \in \{-1,0,1\}$ and $b \in \{0,1,2\}$, we have $a + b \in \{-1,0,1,2,3\}$, so $a + b - 1 \in \{-2, -1, 0, 1, 2\}$. Hence, $|r_{\langle \tau\delta \rangle}(M, A_1) - r_{\langle \tau\delta \rangle}(M, A_0)| \le 2$.

\medskip\noindent
{\bf (1) and (2)}
Let $V$ be the index set of the matrix $M$.
Consider the graph $H$ whose vertices are all subsets of $V$, with an edge between two subsets $X$ and $Y$ if $|X \Delta  Y| = 1$. Clearly,
 $H$ is connected.

For $\bullet\in\{\tau,\tau\delta\tau\}$, for any two subsets $X,Y\subseteq V$ with $|X\Delta Y|=1$, the arguments given in
the two cases above show that
$$|r_{\langle\bullet\rangle}(M,X)-r_{\langle\bullet\rangle}(M,Y)|\le 1.$$
Let $\mathcal{E}_\bullet=\{r_{\langle\bullet\rangle}(M,A) \mid A\subseteq V\}$,
and set $m=\min\mathcal{E}_\bullet$ and $M=\max\mathcal{E}_\bullet$.
Because $H$ is connected, there exists a path
$S_0,S_1,\dots,S_t$ in $H$ with $r_{\langle\bullet\rangle}(M,S_0)=m$ and
$r_{\langle\bullet\rangle}(M,S_t)=M$.
Since $|r_{\langle\bullet\rangle}(M,S_{i+1})-r_{\langle\bullet\rangle}(M,S_i)|\le 1$
for each $i$, the sequence $\{r_{\langle\bullet\rangle}(M,S_i)\}_{i=0}^{t}$ must take
every integer value between $m$ and $M$.
Hence $\mathcal{E}_\bullet = \{m,m+1,\dots,M\}$, which means that $P_{\langle\bullet\rangle}(M,z)$ is interpolating.

For $\bullet\in\{\delta,\delta\tau,\tau\delta\}$,
for any two subsets $X,Y\subseteq V$ with $|X\Delta Y|=1$, the three cases treated above show that $$|r_{\langle\bullet\rangle}(M,X)-r_{\langle\bullet\rangle}(M,Y)|\le 2.$$
 Now suppose that $\mathcal{E}_\bullet$ has a gap of size at least $2$.
Thus there exist integers $k$ and $k+d$ with $d\ge 3$ such that $k,k+d\in\mathcal{E}_\bullet$, but $k+1,k+2\notin\mathcal{E}_\bullet$.
Choose $A,B\subseteq V$ with $r_{\langle\bullet\rangle}(M,A)=k$ and $r_{\langle\bullet\rangle}(M,B)=k+d$.
Because $H$ is connected, there is a path $S_0=A,S_1,\dots ,S_t=B$ in $H$.
Let $r_i = r_{\langle\bullet\rangle}(M,S_i)$ for $i=0,\dots,t$.
We have $r_0=k$, $r_t=k+d$ with $d\ge 3$, and $|r_{i+1}-r_i|\le 2$ for all $i$.
Consider the smallest index $j$ such that $r_j > k$ (such $j$ exists because $r_t > k$).
Then $r_{j-1}=k$ and $1 \le r_j - k \le 2$, so $r_j = k+1$ or $k+2$.
But $k+1$ and $k+2$ are not in $\mathcal{E}_\bullet$, contradicting that $r_j\in\mathcal{E}_\bullet$.
Therefore $\mathcal{E}_\bullet$ cannot contain a gap of size $2$ or larger. Hence, $P_{\langle\bullet\rangle}(M,z)$ has no gaps of size $2$ or larger.

In particular, if $P_{\langle\bullet\rangle}(M,z)$ is an even (resp. odd) polynomial, then all exponents in $\mathcal{E}_\bullet$ are even (resp. odd). Since two distinct even (resp. odd) numbers differ by at least $2$, and there are no gaps of size $2$ or larger, the exponents must form a set of consecutive even (resp. odd) integers. Thus $P_{\langle\bullet\rangle}(M,z)$ is even-interpolating (resp. odd-interpolating).

\medskip\noindent{\bf (3)}
Assume that \(M\) is symmetric and has zero diagonal. For any subset \(A\subseteq V\), the principal submatrices \(M[A]\) and \(M[A^c]\) are also symmetric with zero diagonal. A symmetric matrix with zero diagonal has even rank. Therefore \(\operatorname{rank}(M[A])\) and \(\operatorname{rank}(M[A^c])\) are even. Consequently,
\(r_{\langle\delta\rangle}(M,A)=\operatorname{rank}(M[A])+\operatorname{rank}(M[A^c])\) is even for every \(A\). Thus \(P_{\langle\delta\rangle}(M,z)\) is an even polynomial. By (2), an even polynomial for \(\bullet=\delta\) is even-interpolating. Hence \(P_{\langle\delta\rangle}(M,z)\) is even-interpolating.
\end{proof}

\section{Pivot invariance and inverse duality}

Let $M$ be a square matrix over a field with rows and columns indexed by a finite set $V$, and let \( X \subseteq V \) be such that \(M[X] \) is non-singular. The \emph{pivot} of \( M \) on \( X \), denoted by \( M * X \), is defined as follows. Let
\[
M = \begin{pmatrix} P & Q \\ R & S \end{pmatrix}
\]
with \( P = M[X] \). Then
\[
M*X = \begin{pmatrix} P^{-1} & -P^{-1}Q \\ RP^{-1} & S - RP^{-1}Q \end{pmatrix}.
\]

\begin{lemma}[\cite{Brijder}]\label{pivotrankinvariant}
Let $M$ be a square matrix over a field with rows and columns indexed by a finite set $V$, and let \( X \subseteq V \) be such that \( M[X] \) is non-singular. Then, for all \( A \subseteq V \),
$$
\operatorname{corank} M[A] \;=\; \operatorname{corank}\big((M*X)[A\Delta X]\big).
$$
\end{lemma}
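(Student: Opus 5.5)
The identity is a statement about the Schur complement, and I would prove it by building one linear isomorphism between the two kernels.

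\textbf{Block decomposition.} I split $V$ into the four blocks $X\cap A$, $X\setminus A$, $A\setminus X$ and $V\setminus(X\cup A)$. The last block is not involved in $M[A]$ or in $(M*X)[A\Delta X]$, so it can be ignored. With $P=M[X]$, the defining property of the pivot is the following graph relation: for column vectors $x$ indexed by $X$ and $y$ indexed by $X^c$,
\[
M\begin{pmatrix}x\\ y\end{pmatrix}=\begin{pmatrix}u\\ w\end{pmatrix}
\quad\Longleftrightarrow\quad
(M*X)\begin{pmatrix}u\\ y\end{pmatrix}=\begin{pmatrix}x\\ w\end{pmatrix}.
\]
To check it, solve $u=Px+Qy$ for $x=P^{-1}u-P^{-1}Qy$. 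Then $w=Rx+Sy=RP^{-1}u+(S-RP^{-1}Q)y$. Comparing with the blocks of $M*X$ requires the sign convention of the $(1,2)$ block to match; I would verify the signs carefully, since over a general field the convention matters.

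\textbf{Kernel of $M[A]$.} A vector $z$ lies in $\ker M[A]$ exactly when, after extending $z$ by zero to all of $V$, $Mz$ vanishes on $A$. Write $z=(x,y)$ with $x$ supported on $X\cap A$ and $y$ supported on $A\setminus X$. The conditions are: $x$ vanishes on $X\setminus A$, $y$ vanishes outside $A$, and $(u,w)=Mz$ satisfies $u=0$ on $X\cap A$ and $w=0$ on $A\setminus X$.

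\textbf{Translation to the pivot side.} Apply the graph relation to the pair $(u,y)\mapsto(x,w)$. The vector $(u,y)$ is supported on $(X\setminus A)\cup(A\setminus X)=A\Delta X$. Its image under $M*X$ is $(x,w)$, which vanishes on $X\setminus A$ (because $x$ does) and on $A\setminus X$ (because $w$ does). Hence the restriction of $(u,y)$ to $A\Delta X$ lies in $\ker (M*X)[A\Delta X]$.

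\textbf{Bijectivity.} The map $z\mapsto (u|_{X\setminus A},\,y)$ is linear. Its inverse is the same construction applied in the other direction, using $(M*X)*X=M$ or simply running the equivalence backwards. It is injective because $y$ together with $u$ determines $x$ through the invertibility of $P$. So the two kernels are isomorphic and their dimensions, i.e.\ the coranks, agree.

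\textbf{Expected obstacle.} The main difficulty is bookkeeping. One must check that the support conditions transfer exactly: $u$ is forced to be zero on $X\cap A$ while unconstrained on $X\setminus A$, and this is precisely what becomes the free coordinates on the pivot side. One must also get the sign conventions of the pivot consistent with the graph relation.
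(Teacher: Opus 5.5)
The paper does not prove this lemma. It is quoted from \cite{Brijder} and used as a black box: in Theorem~\ref{thm:pivot-invariance}, and with $X=V$ in part~(2) of Theorem~\ref{thm:matrix-duality}. Your argument is a correct, self-contained proof of it.

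\textbf{Signs.} The sign issue you flag resolves cleanly with the paper's convention ($-P^{-1}Q$ in the upper-right block, $RP^{-1}$ in the lower-left). Applying $M*X$ to $(u,y)$ gives $P^{-1}(u-Qy)=x$ in the first block. In the second block it gives $RP^{-1}(u-Qy)+Sy=Rx+Sy=w$. Because $P$ is invertible, every step reverses, so your exchange relation is a genuine equivalence. The relation $(M*X)*X=M$ also holds with this convention, though you do not need it.

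\textbf{Support bookkeeping.} This is also right. The kernel condition forces $u$ to vanish on $X\cap A$, so $(u,y)$ lives on $A\Delta X$, and $u$ is recovered from $u|_{X\setminus A}$. Together with $x=P^{-1}(u-Qy)$, this makes $z\mapsto(u|_{X\setminus A},y)$ injective. Surjectivity follows by extending a vector of $\ker(M*X)[A\Delta X]$ by zero and running the relation backwards. Setting $(x,w):=(M*X)(u,y)$, the kernel condition gives $x|_{X\setminus A}=0$ and $w|_{A\setminus X}=0$. Hence $z=(x,y)$ is supported on $A$ and satisfies $(Mz)|_A=0$.

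\textbf{Comparison with the citation.} Your route exhibits an explicit isomorphism $\ker M[A]\cong\ker (M*X)[A\Delta X]$, over any field and for arbitrary $A$. For $A\supseteq X$, Schur-complement rank additivity alone would suffice, since then $(M*X)[A\setminus X]=M[A]/M[X]$. The general case genuinely needs something like your exchange relation. Including your argument would also make the pivot-invariance and inverse-duality results of Section~5 self-contained.
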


\begin{theorem}\label{thm:pivot-invariance}
Let $M$ be a square matrix over a field with rows and columns indexed by a finite set $V$, and let $X \subseteq V$ be such that  $M[X]$ is non-singular. Then
\[
P_{\langle \delta \rangle}(M, z) = P_{\langle \delta \rangle}(M \ast X, z).
\]
In particular, if $M$ is invertible, then
\[
P_{\langle \delta \rangle}(M, z) = P_{\langle \delta \rangle}(M \ast V, z) = P_{\langle \delta \rangle}(M^{-1}, z).
\]
\end{theorem}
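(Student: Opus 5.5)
Rewrite each exponent in terms of coranks, so that Lemma~\ref{pivotrankinvariant} applies directly. Since $\operatorname{rank}(M[A]) = |A| - \operatorname{corank}(M[A])$ and $|A|+|A^c| = |V|$, we have
\[
r_{\langle\delta\rangle}(M,A) = |V| - \operatorname{corank}(M[A]) - \operatorname{corank}(M[A^c]).
\]
Let $N = M*X$, which is indexed by the same set $V$. The plan is to exhibit a bijection $\phi$ of $2^V$ with $r_{\langle\delta\rangle}(N,\phi(A)) = r_{\langle\delta\rangle}(M,A)$ for every $A$. The natural choice is $\phi(A) = A\Delta X$; it is an involution, hence a bijection.

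The key step is to check that $\phi$ respects complements. We need
\[
(A\Delta X)^c = A^c \Delta X,
\]
which holds because complementation is symmetric difference with $V$, and symmetric difference is associative and commutative. Apply Lemma~\ref{pivotrankinvariant} twice, once to $A$ and once to $A^c$:
\[
\operatorname{corank}(M[A]) = \operatorname{corank}(N[A\Delta X]), \qquad \operatorname{corank}(M[A^c]) = \operatorname{corank}(N[A^c\Delta X]) = \operatorname{corank}(N[(A\Delta X)^c]).
\]
Substituting into the corank expression gives $r_{\langle\delta\rangle}(M,A) = r_{\langle\delta\rangle}(N,A\Delta X)$. Reindexing the sum in Definition~\ref{maindef} by $B = A\Delta X$ then yields $P_{\langle\delta\rangle}(M,z) = P_{\langle\delta\rangle}(M*X,z)$.

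For the special case, take $X = V$. This is allowed when $M$ is invertible, since then $M[V] = M$ is non-singular. In the block form, $Q$, $R$ and $S$ are empty, so $M*V = P^{-1} = M^{-1}$.

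There is no real obstacle, because the lemma is assumed from \cite{Brijder}. The only points needing care are the complement identity above and the convention that the empty principal submatrix has corank $0$. That convention is consistent with the lemma, since $\operatorname{corank}(M[X]) = 0 = \operatorname{corank}(N[\emptyset])$ when $M[X]$ is non-singular.
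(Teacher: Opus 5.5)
Your proposal is correct and follows the paper's proof: set $B = A\Delta X$, note that $B^c = A^c\Delta X$, apply Lemma~\ref{pivotrankinvariant} to both $A$ and $A^c$, convert the coranks back to ranks using $|V|$, reindex the sum, and take $X=V$ with $M*V=M^{-1}$ for the invertible case.
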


\begin{proof}
For any $A \subseteq V$, let $B = A \,\Delta\, X$. Then $B^c=A^c\,\Delta\, X$. By Lemma~\ref{pivotrankinvariant},
\[
\operatorname{corank} M[A] = \operatorname{corank}\bigl((M \ast X)[B]\bigr),\]
and
\[\operatorname{corank} M[A^{c}] = \operatorname{corank}\bigl((M \ast X)[B^{c}]\bigr).
\]
Then
\[
\begin{aligned}
&~\operatorname{rank}(M[A])+\operatorname{rank}(M[A^{c}])\\
&= |V|-\operatorname{corank}(M[A])-\operatorname{corank}(M[A^{c}])\\
&= |V|-\operatorname{corank}\bigl((M\ast X)[B]\bigr)-\operatorname{corank}\bigl((M\ast X)[B^{c}]\bigr)\\
&= \operatorname{rank}\bigl((M\ast X)[B]\bigr)+\operatorname{rank}\bigl((M\ast X)[B^{c}]\bigr).
\end{aligned}
\]
Therefore,
\[
\begin{aligned}
P_{\langle \delta \rangle}(M, z)
&= \sum_{A \subseteq V} z^{\operatorname{rank}(M[A])+\operatorname{rank}(M[A^{c}])} \\
&= \sum_{B \subseteq V} z^{\operatorname{rank}((M\ast X)[B])+\operatorname{rank}((M\ast X)[B^{c}])} \\
&= P_{\langle \delta \rangle}(M \ast X, z).
\end{aligned}
\]

The invertible case follows by taking $X = V$ and noting that $M \ast V = M^{-1}$.
\end{proof}

Let $\mathbf{A}$ be the symmetric adjacency matrix over $\mathrm{GF}(2)$ of a graft $(G, L_G)$. For any $X \subseteq V(G)$ such that $\mathbf{A}[X]$ is invertible, the pivot $\mathbf{A} \ast X$ is defined. Note that in $\mathrm{GF}(2)$ the pivot preserves symmetry: $(\mathbf{A} \ast X)^{\!\top} = \mathbf{A} \ast X$ \cite{BRHH}. Hence, $\mathbf{A} \ast X$ is the adjacency matrix of another graft.
Combining this fact with Theorem \ref{thm:pivot-invariance}, we therefore obtain a corollary for grafts.

\begin{corollary}\label{cor:graft-pivot-invariance}
     Let $(G,L_G)$ be a graft with adjacency matrix $\mathbf{A}$. Let $X\subseteq V(G)$. If $\mathbf{A}*X$ is defined, denote by $(G',L_{G'})$ the graft with adjacency matrix $\mathbf{A}\ast X$. Then
\[P_{(G,L_G)}^{\langle \delta \rangle}(z)=P_{(G',L_{G'})}^{\langle \delta \rangle}(z).\]
\end{corollary}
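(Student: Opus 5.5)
The corollary is essentially Theorem~\ref{thm:pivot-invariance} specialized to $\mathrm{GF}(2)$, so the plan is to reduce it to that theorem by checking that the graft $(G',L_{G'})$ is well defined and that its adjacency matrix is exactly $\mathbf A\ast X$.

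\textbf{Step 1 (well-definedness).} The hypothesis that $\mathbf A\ast X$ is defined means $\mathbf A[X]$ is non-singular over $\mathrm{GF}(2)$. By the cited fact from \cite{BRHH}, $(\mathbf A\ast X)^{\top}=\mathbf A\ast X$. Any symmetric $\mathrm{GF}(2)$ matrix $N$ indexed by $V(G)$ is the adjacency matrix of a unique graft. Take $G'$ to be the simple graph on $V(G)$ with $uv\in E(G')$ if and only if $N_{uv}=1$ for $u\neq v$; symmetry of $N$ makes this unambiguous. Take $L_{G'}=\{v : N_{vv}=1\}$. Comparing with the definition of $\mathbf A_{(G,L_G)}$ gives $\mathbf A_{(G',L_{G'})}=\mathbf A\ast X$, so the graft in the statement exists and is uniquely determined. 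If one prefers not to rely only on the citation, symmetry can be checked directly in block form with $P=\mathbf A[X]$ symmetric and $R=Q^{\top}$. Then $(P^{-1})^{\top}=P^{-1}$, and $(-P^{-1}Q)^{\top}=-Q^{\top}P^{-1}=RP^{-1}$ because $-1=1$ in characteristic $2$. Finally $S-RP^{-1}Q$ is symmetric.

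\textbf{Step 2 (apply invariance).} By Definition~\ref{graph-polynomial-def} and Theorem~\ref{thm:pivot-invariance} applied to the matrix $\mathbf A$ over $\mathrm{GF}(2)$,
\[
P^{\langle\delta\rangle}_{(G,L_G)}(z)=P_{\langle\delta\rangle}(\mathbf A,z)=P_{\langle\delta\rangle}(\mathbf A\ast X,z)=P_{\langle\delta\rangle}(\mathbf A_{(G',L_{G'})},z)=P^{\langle\delta\rangle}_{(G',L_{G'})}(z).
\]

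The only step needing care is Step~1. Its content is that pivoting over $\mathrm{GF}(2)$ stays inside the class of graft adjacency matrices, which relies on symmetry and on characteristic $2$ for the off-diagonal sign. Once that is in place, the rest is a direct substitution.
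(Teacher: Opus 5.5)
Your proof is correct and follows the paper's route. The paper also notes that pivoting over $\mathrm{GF}(2)$ preserves symmetry (citing this as a known fact), concludes that $\mathbf A\ast X$ is again the adjacency matrix of a graft, and applies Theorem~\ref{thm:pivot-invariance}; your block-form check of symmetry, using $R=Q^{\top}$ and $-1=1$ in characteristic $2$, is a valid self-contained replacement for that citation.
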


\begin{remark}
Pivoting on a matrix corresponds to partial duality on its associated bouquet. Consequently, for bouquets $B_1$ and $B_2$ whose intersection grafts are related by a matrix pivot, Theorem~\ref{thm:main-equivalence} together with Corollary~\ref{cor:graft-pivot-invariance} implies that if $B_1$ and $B_2$ are partial duals of each other (i.e., $B_2 = B_1^{\delta(A)}$ for some $A\subseteq E(B_1)$), then ${}^{\partial}\varepsilon_{B_1}^{\delta}(z) = {}^{\partial}\varepsilon_{B_2}^{\delta}(z)$.
\end{remark}

Gross, Mansour and Tucker \cite{GMT2} showed that partial-$\delta\bullet\delta$ and partial-$\bullet$ polynomials are closely related:

\begin{theorem}[\cite{GMT2}]\label{thm:gmt-duality}
    Let $G$ be a ribbon graph. Then
    $${}^{\partial}\varepsilon_{G}^{\delta\bullet\delta}(z) = {}^{\partial}\varepsilon_{G^*}^{\bullet}(z).$$In particular, ${}^{\partial}\varepsilon_{G}^{\tau\delta\tau}(z) = {}^{\partial}\varepsilon_{G^*}^{\tau}(z)$ and ${}^{\partial}\varepsilon_{G}^{\delta\tau}(z) = {}^{\partial}\varepsilon_{G^*}^{\tau\delta}(z).$
\end{theorem}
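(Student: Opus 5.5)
The plan is to prove the identity term by term. For each fixed $A\subseteq E(G)$ I will show that the ribbon graphs $G^{\delta\bullet\delta(A)}$ and $(G^*)^{\bullet(A)}$ have the same Euler genus. Summing over all $A\subseteq E(G)=E(G^*)$ then gives ${}^{\partial}\varepsilon_{G}^{\delta\bullet\delta}(z)={}^{\partial}\varepsilon_{G^*}^{\bullet}(z)$. Since geometric duality preserves Euler genus, it suffices to show that the full geometric dual of $G^{\delta\bullet\delta(A)}$ equals $(G^*)^{\bullet(A)}$.

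\textbf{Step 1: Euler genus is invariant under duality.} For any ribbon graph $H$, the dual $H^*=H^{\delta(E(H))}$ has the same edges as $H$. It also has the same connected components, since $H$ and $H^*$ are the same surface with its vertex and face discs interchanged. Duality swaps vertices and boundary components, so $v(H^*)=f(H)$ and $f(H^*)=v(H)$. Hence $\chi(H^*)=\chi(H)$, $c(H^*)=c(H)$, and therefore $\varepsilon(H^*)=\varepsilon(H)$.

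\textbf{Step 2: compute the dual as an operator word.} I use the convention $G^{w(A)}$ with the rightmost letter acting first, and write $E=E(G)=A\sqcup A^c$. Then
\[
\bigl(G^{\delta\bullet\delta(A)}\bigr)^* = G^{\delta(E)\,\delta\bullet\delta(A)} = G^{\delta(A)\delta(A^c)\,\delta\bullet\delta(A)}.
\]
Operations on the disjoint sets $A$ and $A^c$ commute, as noted before Proposition~\ref{pro10}. On $A$, the group $\mathfrak G$ gives $\delta\cdot\delta\bullet\delta=\bullet\delta$, since $\delta^2=1$. So the dual equals
\[
G^{\bullet\delta(A)\,\delta(A^c)} = G^{\bullet(A)\,\delta(A)\delta(A^c)} = G^{\bullet(A)\,\delta(E)} = (G^*)^{\bullet(A)}.
\]
Combined with Step 1, this gives $\varepsilon(G^{\delta\bullet\delta(A)})=\varepsilon((G^*)^{\bullet(A)})$ for every $A$, which proves the main identity.

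\textbf{Step 3: the special cases.} For $\bullet=\tau$, the relation $(\delta\tau)^3=1$ together with $\delta^2=\tau^2=1$ gives $\delta\tau\delta=\tau\delta\tau$. This yields ${}^{\partial}\varepsilon_{G}^{\tau\delta\tau}(z)={}^{\partial}\varepsilon_{G^*}^{\tau}(z)$. For $\bullet=\tau\delta$, we have $\delta(\tau\delta)\delta=\delta\tau$, which yields ${}^{\partial}\varepsilon_{G}^{\delta\tau}(z)={}^{\partial}\varepsilon_{G^*}^{\tau\delta}(z)$.

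\textbf{Main obstacle.} The only delicate point is the bookkeeping in Step 2. The operator word must be read in the correct order, and one must justify splitting $\delta(E)$ as $\delta(A)\delta(A^c)$ and commuting letters acting on disjoint edge sets. Both rest on the per-edge locality of $\delta$ and $\tau$ guaranteed by Proposition~\ref{pro10} and the remark preceding it. Once that is settled, the rest is routine.
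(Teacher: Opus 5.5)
Your proof is correct. The identity $\bigl(G^{\delta\bullet\delta(A)}\bigr)^*=(G^*)^{\bullet(A)}$, together with invariance of Euler genus under full duality, gives the result term by term. The paper itself quotes this theorem from \cite{GMT2} without proof, but your bookkeeping is the same technique it uses in the proof of Theorem~\ref{lem02}, where $(B^\sigma)^*=B^{\sigma\delta(E)}$ is simplified via $\delta(E)=\delta(F)\delta(F^c)$ and the $S_3$ relations.

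One cosmetic point: the paper defines $G^{\xi(A)\pi(B)}=(G^{\xi(A)})^{\pi(B)}$, so products across edge sets are read left to right, and in its notation your first display would be $G^{\delta\bullet\delta(A)\,\delta(E)}$. Since you state your own convention explicitly, the argument is unaffected.
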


When passing to the matrix setting, recall that for a non-singular matrix $M$, the inverse $M^{-1}$ plays the role of the geometric dual (since pivoting on the whole vertex set gives $M^{-1}$).
Theorem~\ref{thm:pivot-invariance} already shows that the partial-$\langle\delta\rangle$ polynomial is invariant under inversion:
\[
P_{\langle\delta\rangle}(M,z)=P_{\langle\delta\rangle}(M^{-1},z).
\]
For the other partial-$\langle\bullet\rangle$ polynomials, the following theorem establishes duality relations between $M$ and $M^{-1}$ that are analogous to the ribbon graph identities in Theorem~\ref{thm:gmt-duality}.

\begin{theorem}\label{thm:matrix-duality}
Let \(M\) be a non-singular square matrix over a field, with rows and columns indexed by a finite set \(V\). Then
\begin{enumerate}
\item[$(1)$] $P_{\langle\tau\delta\tau\rangle}(M, z) = P_{\langle\tau\rangle}(M^{-1}, z),$

\item[$(2)$] $P_{\langle\delta\tau\rangle}(M, z) = P_{\langle\tau\delta\rangle}(M^{-1}, z).$
\end{enumerate}
\end{theorem}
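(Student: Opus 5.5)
The plan is to match the two sums term by term using the identity bijection $A\mapsto A$ on subsets of $V$. I will show that for every $A\subseteq V$
\[
r_{\langle\tau\delta\tau\rangle}(M,A)=r_{\langle\tau\rangle}(M^{-1},A),\qquad
r_{\langle\delta\tau\rangle}(M,A)=r_{\langle\tau\delta\rangle}(M^{-1},A).
\]
Throughout, set $n=|V|$. Since $M$ is non-singular, $\operatorname{rank}(M)=n$.

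\textbf{Key identity.} The central step, and the only one I expect to need care, is
\[
\operatorname{corank}(M^{-1}+I_A)=\operatorname{corank}\bigl((M+I_A)[A]\bigr).
\]
To get it, factor $M^{-1}+I_A=M^{-1}(I+MI_A)$. Since $M^{-1}$ is invertible, $\operatorname{corank}(M^{-1}+I_A)=\operatorname{corank}(I+MI_A)$.

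Now look at the columns of $I+MI_A$. A column indexed by $j\notin A$ is just the standard basis vector $\mathbf u_j$. After reordering $V$ as $A$ followed by $A^c$, the matrix becomes block triangular,
\[
\begin{pmatrix} (I+M)[A] & 0\\ \ast & I\end{pmatrix}.
\]
Its corank therefore equals $\operatorname{corank}\bigl((I+M)[A]\bigr)$. Finally, $(I+M)[A]=(M+I_A)[A]$, which gives the identity. I would double-check the block shape carefully, in particular that the identity block sits in the $A^c\times A^c$ position and the zero block in $A\times A^c$, since this is where an indexing slip could occur.

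\textbf{Part (1).} Using the key identity,
\[
r_{\langle\tau\delta\tau\rangle}(M,A)=n-\operatorname{corank}\bigl((M+I_A)[A]\bigr)=n-\operatorname{corank}(M^{-1}+I_A)=\operatorname{rank}(M^{-1}+I_A),
\]
and the last expression is $r_{\langle\tau\rangle}(M^{-1},A)$. Summing over $A\subseteq V$ gives (1).

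\textbf{Part (2).} First, the key identity gives
\[
\operatorname{rank}(M^{-1}+I_A)=n-\operatorname{corank}\bigl((M+I_A)[A]\bigr)=|A^c|+\operatorname{rank}\bigl((M+I_A)[A]\bigr).
\]
Second, Lemma~\ref{pivotrankinvariant} with $X=V$ (so that $M*V=M^{-1}$), applied to the subset $A^c$, gives $\operatorname{corank}(M[A^c])=\operatorname{corank}(M^{-1}[A])$. This is the nullity theorem. Hence
\[
\operatorname{corank}(M^{-1}[A])=|A^c|-\operatorname{rank}(M[A^c]).
\]
Subtracting the second display from the first,
\[
r_{\langle\tau\delta\rangle}(M^{-1},A)=\operatorname{rank}\bigl((M+I_A)[A]\bigr)+\operatorname{rank}(M[A^c])=r_{\langle\delta\tau\rangle}(M,A).
\]
Summing over $A\subseteq V$ gives (2).
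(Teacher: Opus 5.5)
Your proposal is correct and is essentially the paper's proof: both reduce $\operatorname{rank}(M^{-1}+I_A)$ to the block-triangular matrix $I+MI_A$, whose rank is $|A^c|+\operatorname{rank}((M+I_A)[A])$, and both get part (2) from $\operatorname{corank}(M[A^c])=\operatorname{corank}(M^{-1}[A])$, which is Lemma~\ref{pivotrankinvariant} with $X=V$. Working with coranks and ordering $V$ as $(A,A^c)$ instead of $(A^c,A)$ is only a cosmetic difference, and your block shape is right.
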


\begin{proof} {\bf (1)}
Let $n=|V|$. For any $A\subseteq V$ write $M$ in block form with respect to the partition $(A^c,A)$:
\[
M=\begin{bmatrix}P&Q\\[2pt]R&S\end{bmatrix},
\]
where $P$ is the submatrix on $A^c$, $S$ on $A$, $Q$ is the $A^c\times A$ block, and $R$ is the $A\times A^c$ block.
Then $(M+I_A)[A]=S+I$.  Because $M$ is non-singular, $\operatorname{rank}(M)=n$, and
\[
\operatorname{rank}(M)-\operatorname{corank}((M+I_A)[A])
 = n-\bigl(|A|-\operatorname{rank}(S+I)\bigr)
 = |A^c|+\operatorname{rank}(S+I).
\]

On the other hand, the partial-$\langle\tau\rangle$ polynomial of $M^{-1}$ contains the term
$\operatorname{rank}(M^{-1}+I_A)$. Left-multiplying by the non-singular matrix $M$ preserves rank, so
\[
\operatorname{rank}(M^{-1}+I_A)=\operatorname{rank}(I+M I_A).
\]
With $I_A=\begin{bmatrix}0&0\\[2pt]0&I\end{bmatrix}$ we have
\[
M I_A=\begin{bmatrix}0&Q\\[2pt]0&S\end{bmatrix},\qquad
I+M I_A=\begin{bmatrix}I_{A^c}&Q\\[2pt]0&S+I\end{bmatrix}.
\]
Hence
\[
\operatorname{rank}(I+M I_A)=\operatorname{rank}(I_{A^c})+\operatorname{rank}(S+I)
 =|A^c|+\operatorname{rank}(S+I).
\]
Thus for every $A\subseteq V$,
\[
\operatorname{rank}(M)-\operatorname{corank}((M+I_A)[A])=\operatorname{rank}(M^{-1}+I_A).
\]
By Definition \ref{maindef}, $r_{\langle\tau\delta\tau\rangle}(M,A)=r_{\langle\tau\rangle}(M^{-1},A)$. Summing over all $A\subseteq V$ gives
\[
P_{\langle\tau\delta\tau\rangle}(M,z)=P_{\langle\tau\rangle}(M^{-1},z).
\]

\noindent{\bf (2)} We need to show that for every \(A\subseteq V\),
\[
r_{\langle\delta\tau\rangle}(M,A)=r_{\langle\tau\delta\rangle}(M^{-1},A).
\]
From Definition \ref{maindef}, we have
\[
r_{\langle\delta\tau\rangle}(M,A)=\operatorname{rank}((M+I_A)[A])+\operatorname{rank}(M[A^c])
\]
and
\[
r_{\langle\tau\delta\rangle}(M^{-1},A)=\operatorname{rank}(M^{-1}+I_A)-\operatorname{corank}(M^{-1}[A]),
\]
where $M^{-1}[A]$ denotes the principal submatrix of the inverse matrix $M^{-1}$ on the index set $A$.
The proof of part (1) established
\[
\operatorname{rank}(M^{-1}+I_A)=\operatorname{rank}((M+I_A)[A])+|A^c|,
\]
hence
\[
\begin{aligned}
r_{\langle\tau\delta\rangle}(M^{-1},A)
=\operatorname{rank}((M+I_A)[A])+|A^c|-\operatorname{corank}(M^{-1}[A]).
\end{aligned}
\]
Thus \(r_{\langle\delta\tau\rangle}(M,A)=r_{\langle\tau\delta\rangle}(M^{-1},A)\) is equivalent to
\[\operatorname{rank}(M[A^c])=|A^c|-\operatorname{corank}(M^{-1}[A]),\]
that is, \[\operatorname{corank}(M[A^c])=\operatorname{corank}(M^{-1}[A]),\]
which comes from Lemma \ref{pivotrankinvariant} by taking $X=V$.
\end{proof}

\begin{remark}
Theorem~\ref{thm:matrix-duality} does not extend to matrix pivots on arbitrary proper subsets. That is, for a proper subset $X \subsetneq V$ with $M[X]$ non-singular, the identities
\[
P_{\langle\tau\delta\tau\rangle}(M, z) = P_{\langle\tau\rangle}(M*X, z) \quad \text{and} \quad
P_{\langle \delta\tau\rangle}(M, z) = P_{\langle\tau\delta\rangle}(M*X, z)
\]
do not hold in general. For example, let \(M\) be the adjacency matrix of the wheel graph \(W_5\) on \(6\) vertices $\{0, 1, 2, 3, 4, 5\}$,
whose edge set is
\[
E(W_5)=\bigl\{(0, 1), (0, 2), (0, 3), (0, 4), (0, 5), (1, 2), (2, 3), (3, 4), (4, 5), (5, 1)\bigr\}.
\]
The pivot \(M*\{1,2\}\) is the adjacency matrix of a graph on the same vertex set with edges
\[
E=\bigl\{(0, 1), (0, 2), (0, 4), (1, 2), (1, 3), (3, 4), (4, 5), (2, 5), (5, 3)\bigr\}.
\]
Direct computation gives
\begin{align*}
P_{\langle\tau\delta\tau\rangle}(M, z)    &= z^{4}\bigl(33z^{2} + 26z + 5\bigr),\\[1mm]
  P_{\langle\tau\rangle}(M*\{1,2\}, z)      &= z^{4}\bigl(35z^{2} + 25z + 4\bigr),\\[1mm]
  P_{\langle\delta\tau\rangle}(M, z)        &= z^{3}\bigl(11z^{3} + 33z^{2} + 5z + 15\bigr),\\[1mm]
  P_{\langle\tau\delta\rangle}(M*\{1,2\}, z)&= z^{3}\bigl(12z^{3} + 20z^{2} + 31z + 1\bigr).
\end{align*}

\end{remark}

\begin{corollary}
     Let $(G, L_G)$ be a graft with non-singular adjacency matrix $\mathbf{A}$, and let $(G', L_{G'})$ be the graft corresponding to $\mathbf{A}^{-1}$. Then
\[P_{(G,L_G)}^{\langle \tau\delta\tau \rangle}(z)=P_{(G',L_{G'})}^{\langle \tau \rangle}(z)\quad \text{and} \quad P_{(G,L_G)}^{\langle \delta\tau \rangle}(z)=P_{(G',L_{G'})}^{\langle \tau\delta \rangle}(z).\]
\end{corollary}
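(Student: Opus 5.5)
This corollary follows directly from Theorem~\ref{thm:matrix-duality}, applied to $M=\mathbf{A}$ over $\mathrm{GF}(2)$. The one thing to check is that $\mathbf{A}^{-1}$ really is the adjacency matrix of some graft $(G',L_{G'})$. Then Definition~\ref{graph-polynomial-def} turns the matrix identities into the graft identities.

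\textbf{Step 1: $\mathbf{A}^{-1}$ is a graft adjacency matrix.} Since $\mathbf{A}$ is non-singular, the pivot $\mathbf{A}*V$ is defined and equals $\mathbf{A}^{-1}$. By the symmetry-preservation fact from \cite{BRHH} recalled before Corollary~\ref{cor:graft-pivot-invariance}, $\mathbf{A}^{-1}$ is symmetric over $\mathrm{GF}(2)$. A direct argument also works: $(\mathbf{A}^{-1})^{\top}=(\mathbf{A}^{\top})^{-1}=\mathbf{A}^{-1}$.

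Every symmetric $0/1$ matrix over $\mathrm{GF}(2)$ with rows indexed by $V(G)$ defines a graft. Its underlying simple graph $G'$ has $uv$ as an edge exactly when $u\neq v$ and the $(u,v)$ entry is $1$. Its distinguished set $L_{G'}$ consists of the vertices whose diagonal entry is $1$. Hence $\mathbf{A}_{(G',L_{G'})}=\mathbf{A}^{-1}$, and this graft is uniquely determined.

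\textbf{Step 2: transfer the identities.} By Definition~\ref{graph-polynomial-def},
\[
P_{(G,L_G)}^{\langle\tau\delta\tau\rangle}(z)=P_{\langle\tau\delta\tau\rangle}(\mathbf{A},z)
\quad\text{and}\quad
P_{(G',L_{G'})}^{\langle\tau\rangle}(z)=P_{\langle\tau\rangle}(\mathbf{A}^{-1},z).
\]
Part (1) of Theorem~\ref{thm:matrix-duality} states that these two right-hand sides are equal, which gives the first identity. Part (2) gives the second identity in the same way, comparing $\langle\delta\tau\rangle$ for $\mathbf{A}$ with $\langle\tau\delta\rangle$ for $\mathbf{A}^{-1}$.

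Theorem~\ref{thm:matrix-duality} holds over any field, $\mathrm{GF}(2)$ included. Its proof only uses rank manipulations and Lemma~\ref{pivotrankinvariant}, both of which are field-independent, so no separate characteristic-two check is needed.

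The only step that needs any care is Step 1, the symmetry of $\mathbf{A}^{-1}$, and it is immediate. The rest is just unwinding definitions.
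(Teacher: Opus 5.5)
Your proof is correct and takes the same approach as the paper, which states this corollary as an immediate consequence of Theorem~\ref{thm:matrix-duality} applied to $M=\mathbf{A}$ over $\mathrm{GF}(2)$, relying on the fact that $\mathbf{A}^{-1}=\mathbf{A}\ast V$ is symmetric and hence the adjacency matrix of a graft. Your direct check $(\mathbf{A}^{-1})^{\top}=(\mathbf{A}^{\top})^{-1}=\mathbf{A}^{-1}$ is a clean, self-contained substitute for the paper's citation of the pivot-symmetry fact.
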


\begin{remark}
The partial-$\langle\tau\delta\tau\rangle$ polynomial is closely related to the interlace polynomial of a square matrix~\cite{Brijder}.
For a square matrix $M$ over a field, with rows and columns indexed by a finite set $V$, the interlace polynomial $q(M,x)$ of $M$ is defined as
\[
q(M,x)=\sum_{A \subseteq V} (x-1)^{\operatorname{corank}(M[A])}.
\]
One can verify that
\[
P_{\langle \tau\delta\tau \rangle}(M,z)=z^{\operatorname{rank}(M)}\,
  q\!\left(M+I,\ 1+\frac{1}{z}\right).
\]
It is known that $q(M,x)$ is invariant under pivoting: whenever $M\ast A$ is defined, $q(M,x)=q(M\ast A,x)$.
Consequently, if $(M+I)\ast A$ is defined, then
\[
z^{-\operatorname{rank}(M)}P_{\langle \tau\delta\tau \rangle}(M,z)=
z^{-\operatorname{rank}(((M+I)\ast A)-I)}\,
P_{\langle \tau\delta\tau \rangle}\bigl(((M+I)\ast A)-I,\;z\bigr).
\]
\end{remark}

\section{Future research}

The partial-twualities on ribbon graphs begin with two elementary operations, $\delta$ and $\tau$, that can be applied to any subset of edges. These operations generate a group isomorphic to $S_3$, and the associated partial-twuality polynomials enumerate all partial-twualities by Euler genus.

For bouquets, we have shown that the Euler genus under any partial-twuality operation can be expressed as a linear combination of ranks and/or coranks of the adjacency matrix of the intersection graft (Theorem~\ref{lem02}). This observation led us to define, for an arbitrary square matrix $M$ over a field, five partial-$\langle\bullet\rangle$ polynomials $P_{\langle\bullet\rangle}(M,z)$ whose exponents are built from ranks and coranks of $M$, $M+I_A$, and their principal submatrices (Definition~\ref{maindef}).
This algebraic definition leads to a natural question about its underlying structure:

\begin{problem}\label{prob:matrix-action}
Do there exist two matrix operations $\delta$ and $\tau$ on pairs $(M, A)$, where $M$ is a square matrix and $A$ is a subset of its index set, that satisfy the relations $\delta^2 = \tau^2 = (\delta\tau)^3 = \mathrm{id}$, generating an $S_3$ action on such pairs, and such that for each $\bullet \in \{\delta, \tau, \delta\tau, \tau\delta, \tau\delta\tau\}$, the exponent $r_{\langle\bullet\rangle}(M, A)$ coincides with some parameter of the matrix obtained by applying $\bullet$ to $(M, A)$?
\end{problem}

Our work demonstrates that the five partial-twuality polynomials of a bouquet are precisely the five partial-$\langle\bullet\rangle$ polynomials of its associated graft. This equivalence translates questions concerning partial-twuality polynomials of a bouquet from \cite{GMT, GMT2} into questions about rank-generating functions of square matrices over a field. For $\bullet \in \{ \delta, \tau, \delta\tau, \tau\delta, \tau\delta\tau\}$, it is therefore natural to pose the following matrix-theoretic problems:

\begin{problem}
Determine conditions on a square matrix $M$ over a field such that the partial-$\langle\bullet\rangle$ polynomial of $M$ is even-interpolating, odd-interpolating, or both.
\end{problem}

\begin{problem}
Characterize those square matrices $M$ over a field for which the coefficient sequence of the partial-$\langle\bullet\rangle$ polynomial is log-concave (or unimodal).
\end{problem}

Chmutov \cite{CG3} discovered that the partial-dual Euler genus
polynomial of ribbon graphs appearing from chord diagrams is a weight system from the theory of Vassiliev knot invariants. Deng, Dong, Jin and Yan \cite{DDJY} generalized this result to the framed chord diagrams. Furthermore, Deng, Jin and Yan \cite{DJY} extended  these results to the twist polynomial of a set system by proving that the twist polynomial on set systems satisfies the four-term relation and therefore determines a finite type link invariant. Recently, Cheng \cite{Cheng} showed that the partial-dual polynomial satisfies the four-term relation of graphs, which was introduced by Lando in \cite{Lando}. Our work generalizes these polynomial invariants to a general matrix algebra framework. It is therefore natural to investigate whether the connection to the four-term relation persists at this more fundamental algebraic level.

\begin{problem}
Does there exist a matrix four-term relation, that is, a direct generalization to square matrices of the four-term relations known for chord diagrams and graphs? If such a relation exists, do the matrix partial-twuality polynomials satisfy it?
\end{problem}

\section*{Acknowledgements}
This work is supported by NSFC (Nos. 12471326, 12571379).

\end{document}